\theoremstyle{plain}
\newtheorem{thm}{Theorem}[section]
\newtheorem{lem}[thm]{Lemma}
\newtheorem{cor}[thm]{Corollary}
\newtheorem{prop}[thm]{Proposition}
\theoremstyle{definition}
\newtheorem{defn}[thm]{Definition}
\newtheorem{ex}[thm]{Example}
\newcommand{\ts}[1]{\mbox{\textnormal{\textsf{#1}}}}
\newcommand{\ol}[1]{\overline{#1}}
\newcommand{\ub}[1]{\underbracket[.5pt][1pt] {#1}}
\newcommand{\bond}[1]{\textup{\textsf{V}}({#1})}
\renewcommand{\bar}[1]{\textup{\textsf{B}}({#1})}
\newcommand{\class}[1]{\|{#1}\|}
\newcommand{\av}{\textup{Av}}
\newcommand{\pt}{\ast}
\renewcommand{\b}{\textup{\textsf{b}}}
\renewcommand{\v}{\textup{\textsf{v}}}
\title{Coincidental pattern avoidance}
\author{Bridget Eileen Tenner}
\address{Department of Mathematical Sciences, DePaul University, Chicago, IL 60614}
\email{bridget@math.depaul.edu}
\thanks{Research partially supported by a DePaul University Competitive Research Leave Grant.} 
\subjclass[2010]{Primary: 05A05; Secondary: 05A15}
\begin{document}

\begin{abstract}
There are several versions of permutation pattern avoidance that have arisen in the literature, and some known examples of two different types of pattern avoidance coinciding.  In this paper, we examine barred patterns and vincular patterns.  Answering a question of Steingr\'imsson, we determine when barred pattern avoidance coincides with avoiding a finite set of vincular patterns, and when vincular pattern avoidance coincides with avoiding a finite set of barred patterns.  There are $720$ barred patterns with this property, each having between $3$ and $7$ letters, of which at most $2$ are barred, and there are $48$ vincular patterns with this property, each having between $2$ and $4$ letters and exactly one bond.\\

\noindent \emph{Keywords:} permutation, pattern, barred pattern, vincular pattern, generalized pattern
\end{abstract}

\maketitle

\section{Introduction}

A number of phenomena are equivalent to pattern avoidance; that is, the objects possessing a particular property $P$ can be described by permutations, and these permutations are precisely the permutations that avoid a particular set of patterns $S(P)$.  For example, the permutations whose principle order ideals in the Bruhat order are boolean are exactly those permutations that avoid the two patterns $321$ and $3412$ \cite{tenner patt-bru}.  For classical pattern avoidance, such phenomena are catalogued in \cite{dppa}. 

The classical idea of pattern avoidance began with the work of Simion and Schmidt \cite{simion schmidt}.  Since then, variations on the theme have been developed, including \emph{barred} patterns, \emph{vincular} patterns, \emph{bi-vincular} patterns, \emph{mesh} patterns, \emph{marked mesh} patterns, and \emph{Bruhat-restricted} patterns.  The purpose of the current paper is to describe the relationship between the first two of these variations: barred patterns and vincular patterns.  Barred patterns were first introduced by West \cite{west-thesis}, where he showed that two-stack sortable permutations are exactly those that avoid a particular classical pattern on $4$ letters and a barred pattern on $5$ letters.  Vincular patterns, also called ``generalized'' or ``dashed'' patterns, were first studied systematically by Babson and Steingr\'imsson in \cite{babson steingrimsson}, where many Mahonian permutation statistics were shown to be linear combinations of vincular patterns.  Vincular patterns were also surveyed in \cite{steingrimsson}.  (For the sake of completeness, we point out that the other types of patterns mentioned above can be found in \cite{bousquet-melou claesson dukes kitaev, branden claesson, ulfarsson, woo yong}, respectively.  More generally, patterns in both permutations and words have been explored in the text \cite{kitaev}.)

Perhaps surprisingly, given their definitions, there are sporadic instances of different types of pattern avoidance coinciding.  For example,
\begin{eqnarray*}
\av(\ub{13}2) &=& \av(132),\\
\av(41\ol{3}52) &=& \av(3\ub{14}2), \text{ and}\\
\av(63\ol{1}\ol{7}524) &=& \av(5\ub{24}13,\ 63\ub{15}24,\ 5\ub{26}413)
\end{eqnarray*}
(see Lemma~\ref{lem:bondable ex} and Example~\ref{ex:coincidental}).  On the other hand, for any classical permutation $p$, there is no set of barred permutations $\ts{B}$ for which $\av(p)$ and $\av(\ts{B})$ are the same set (see Lemma~\ref{lem:barred not equiv to classical}).

In this paper, we answer a question of Steingr\'imsson, posed in \cite{steingrimsson}, to characterize when barred pattern avoidance can be mimicked by avoiding a set of vincular patterns.  We also answer the symmetric question, of when vincular pattern avoidance can be mimicked by avoiding a set of barred patterns.  These answers appear in Theorem~\ref{thm:main bv} and Corollary~\ref{cor:main vb}, respectively, and it turns out that there are exactly $720$ barred patterns that can be mimicked in this way, and $48$ vincular patterns that can be.  The elements in this latter category are listed explicitly in Table~\ref{table:co vinc patterns}.

Section~\ref{section:flavors} is devoted to carefully defining barred and vincular patterns, giving examples of each, and exploring basic properties.  In Sections~\ref{section:prep vb} and~\ref{section:prep bv}, we lay the groundwork for stating and proving the main results of the paper, which occur in Sections~\ref{section:main} and~\ref{section:proof}, respectively.  Finally, in the last section, we suggest two directions for further research on this topic.

\section{Flavors of pattern avoidance}\label{section:flavors}

When studying permutation patterns, it is most useful to write permutations in one-line notation, although it should be noted that a relationship between patterns and reduced decompositions was shown in \cite{tenner rdpp}.  Throughout this paper, all permutations will be written in one-line notation.

\begin{defn}
For a positive integer $n$, the set of integers $\{1, \ldots, n\}$ is denoted $[n]$.
\end{defn}

\begin{ex}
The permutation $52134$ is the automorphism of $[5]$ defined by $1 \mapsto 5$, $2 \mapsto 2$, $3 \mapsto 1$, $4\mapsto 3$, and $5 \mapsto 4$.
\end{ex}

The classical concept of pattern avoidance is that a permutation $w$ \emph{contains} a \emph{pattern} $p$ if there is a subsequence of the one-line notation of $w$ that is in the same relative order as the letters in $p$.  If $w$ does not contain $p$, then $w$ \emph{avoids} $p$.

\begin{ex}\
\begin{itemize}
\item The permutation $52134$ contains the pattern $312$ because the letters in the subsequence $513$ are in the same relative order as the letters in the pattern $312$.  There are also three other occurrences of the pattern $312$ in $52134$: $514$, $523$, and $524$.
\item The permutation $52134$ avoids the pattern $1234$ because it does not have an increasing subsequence of length $4$.
\end{itemize}
\end{ex}

Patterns, whether they be classical, barred, or vincular, are concerned with the relative order of values in a permutation.  Thus, throughout this paper, order isomorphic permutations of subsets of $\mathbb{R}$ will be considered equivalent.  This equivalence will be denoted ``$\approx$,'' or ``='' when no confusion can arise.

\begin{ex}
As permutations of subsets of $\mathbb{R}$, we have $3\ 1\ 2 \ \approx \ \sqrt{5} \ -\!1 \ 0$.
\end{ex}

The two pattern generalizations that we study in this work are barred patterns and vincular patterns.  Although this paper examines multiple types of patterns simultaneously, it will be obvious from the notation whether a given pattern is classical (no decoration), barred (bars over some symbols), or vincular (brackets beneath some symbols).  Given a barred or vincular pattern $p$, the classical pattern underlying $p$, obtained by ignoring all decorations, will be denoted $\class{p}$.

\begin{defn}\label{defn:barred}
A \emph{barred} pattern is a permutation in which a subset of the letters have bars written over them.  A barred pattern is \emph{proper} if bars cover a proper subset of its letters.  A permutation $w$ contains a barred pattern $\b$ if $w$ contains an occurrence of the unbarred portion of $\b$ that is not also part of an occurrence of an entire $\class{\b}$-pattern.  Otherwise --- that is, if each occurrence of the unbarred portion of $\b$ in $w$ is part of a $\class{\b}$-pattern --- the permutation $w$ avoids $\b$.
\end{defn}

Henceforth, all barred patterns will be assumed to be proper, and simply called ``barred.''

\begin{ex}
Consider the barred pattern $\b = 3\ol{2}4\ol{1}$, where the unbarred portion is $34 \approx 12$.  For a permutation $w$ to contain $\b$, some $12$-pattern in $w$ must not be part of a $3241$-pattern in $w$.  The permutation $42351$ contains $\b$ because the increasing subsequence $23$ is not part of any $3241$-pattern.  On the other hand, the permutation $43251$ avoids $\b$.
\end{ex}

Generalizing the idea of classical patterns in a different direction, vincular patterns allow some letters in the pattern to be forced to be consecutive, or ``bonded'' together (hence the terminology). 

\begin{defn}\label{defn:vincular}
A \emph{vincular} pattern is a classical permutation in which consecutive symbols. including the left and right endpoints (each of which is denoted ``$*$''), may be bonded together.  A vincular pattern is \emph{proper} if there is at least one bond, and if the non-$*$ symbols are not all bonded together.  A permutation $w$ contains a vincular pattern $\v$ if $w$ contains a $\class{\v}$-pattern in which any substring bonded together in $\v$ must appear consecutively in $w$.  If the left (respectively, right) ``$\pt$'' is bonded to its adjacent symbol in $\v$, then the corresponding letter in $w$ must appear in the leftmost (respectively, rightmost) position of $w$.  Otherwise, the permutation $w$ avoids $\v$.
\end{defn}

Henceforth, all vincular patterns will be assumed to be proper, and simply called ``vincular.''

In a vincular pattern, brackets are drawn to indicate the bonds.

\begin{ex}\
\begin{itemize}
\item Consider the vincular pattern $\v = 3\ub{14}2$.  The permutation $41532$ contains $\v$ in two ways: $4153$ and $4152$.  The permutation $41352$ avoids $\v$ because, although $4152$ is a $\class{\v}$-pattern in $41352$, the $1$ and the $5$ are not consecutive.
\item Consider the vincular pattern $\v = \ub{\pt 3} \ub{14} 2$.  The permutation $41532$ contains $\v$ in two ways: $4153$ and $4152$.  The permutation $24153$ avoids $\v$ because the single occurrence of $\class{\v}$ in $w$ does not begin in the leftmost position of $24153$.
\end{itemize}
\end{ex}

In general, we define the set of permutations avoiding a pattern as follows.

\begin{defn}
For a (classical, barred, or vincular) pattern $p$, let $\av(p)$ be the set of permutations that avoid $p$.  Similarly, if $P$ is a collection of patterns (possibly of different types), then $\av(P)$ is the set of permutations simultaneously avoiding all patterns in $P$:
$$\av(P) = \bigcap_{p \in P} \av(p).$$
It will be helpful to note that for any $p \in P$,
$$\av(P) \subseteq \av(p).$$
\end{defn}

The following lemma follows immediately from Definitions~\ref{defn:barred} and~\ref{defn:vincular}.

\begin{lem}\label{lem:x in av(x)}
Let $\b$ be a barred pattern and $\v$ a vincular pattern.  Then $\class{\b} \in \av(\b)$ and $\class{\v} \not\in \av(\v)$.
\end{lem}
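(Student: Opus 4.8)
The plan is to evaluate Definitions~\ref{defn:barred} and~\ref{defn:vincular} directly on the single, maximally simple permutation in each case, exploiting the observation that the full one-line notation of a permutation is an occurrence of its own underlying classical pattern and occupies every position consecutively.

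For the first assertion I would take $w=\class{\b}$, say of length $n$. The only occurrence of the classical pattern $\class{\b}$ in $w$ is the entire word $w$ itself, sitting at positions $1,2,\dots,n$. Now consider any occurrence of the unbarred portion of $\b$ inside $w$; it uses some subset of $\{1,\dots,n\}$ and hence is part of that full $\class{\b}$-occurrence. So every occurrence of the unbarred portion of $\b$ in $w$ is part of an occurrence of a $\class{\b}$-pattern, which by Definition~\ref{defn:barred} is exactly the condition for $w=\class{\b}$ to avoid $\b$. Thus $\class{\b}\in\av(\b)$.

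For the second assertion I would take $w=\class{\v}$, again of length $n$, and propose the entire word $w$ at positions $1,\dots,n$ as an occurrence of the classical pattern $\class{\v}$ in $w$ (it literally is $\class{\v}$). Because this occurrence uses all $n$ positions in order, any substring of $\v$ that is bonded together is carried to a string of consecutive letters of $w$; and if the left (respectively right) ``$\pt$'' is bonded in $\v$, the letter of $w$ playing the adjacent symbol is $w_1$ (respectively $w_n$), which indeed sits in the leftmost (respectively rightmost) position of $w$. Hence all requirements in Definition~\ref{defn:vincular} for $w$ to contain $\v$ are satisfied, so $\class{\v}\not\in\av(\v)$.

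I do not expect a real obstacle: the statement is pure bookkeeping against the definitions. The one point that deserves a moment of care is the reading of ``part of an occurrence of an entire $\class{\b}$-pattern'' in Definition~\ref{defn:barred} --- the argument uses only that the occurrence of the unbarred portion sits, as a set of positions, inside \emph{some} occurrence of $\class{\b}$, and in the permutation $\class{\b}$ this is automatic because the whole permutation is such an occurrence. (Properness of $\b$ and $\v$, assumed throughout, plays no role in this lemma.)
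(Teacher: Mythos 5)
Your proposal is correct and is essentially the paper's own argument: the paper offers no written proof, stating only that the lemma ``follows immediately from Definitions~\ref{defn:barred} and~\ref{defn:vincular},'' and your write-up is precisely that unwinding, with $\class{\b}$ itself serving as the ambient occurrence of $\class{\b}$ and the full word of $\class{\v}$ serving as an occurrence of $\v$ with every bonded substring consecutive. Your closing caveat about reading ``part of'' positionally (the unbarred occurrence need only sit inside \emph{some} occurrence of $\class{\b}$, not necessarily playing the roles of the unbarred letters) is the right one, and it is the reading the paper itself uses in its $43251$ example.
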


Barred and vincular patterns have arisen in numerous contexts in the literature.  We highlight a few of these here.  Precise definitions of the object they characterize are not relevant to this work, and the interested reader is referred to the citations given.

\begin{ex}\
\begin{itemize}
\item The two-stack sortable permutations are $\av(2341,3\ol{5}241)$ (see \cite{west-thesis}).
\item Baxter permutations are $\av(41\ol{3}52,25\ol{3}14)$ (see \cite{gire}).
\item The elements of $\av(3142,2\ub{41}3)$ are in bijection with $\beta(1,0)$-trees (see \cite{claesson kitaev steingrimsson}).
\item By definition, alternating permutations are $\av(\ub{123},\ub{321},\ub{\pt12})$ and reverse alternating permutations are $\av(\ub{123},\ub{321},\ub{\pt21})$.
\end{itemize}
\end{ex}

There are some situations where vincular pattern avoidance can be equivalently phrased in terms of classical pattern avoidance, such as $\av(\ub{13}2) = \av(132)$.  However, the same cannot be said for barred pattern avoidance.

\begin{lem}\label{lem:barred not equiv to classical}
Let $p$ be a permutation.  There is no set $\ts{B}$ of barred patterns such that $\av(p) = \av(\ts{B})$.
\end{lem}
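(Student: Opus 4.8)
The plan is to exploit an asymmetry between barred and classical avoidance: by Lemma~\ref{lem:x in av(x)}, the underlying classical permutation $\class{\b}$ always lies in $\av(\b)$, so a barred class always contains the pattern it is ``built from.'' I will show that a classical class $\av(p)$, by contrast, is \emph{downward closed} under the containment order on permutations, whereas any barred class fails to be downward closed, so the two notions can never coincide. Concretely, if $\av(p) = \av(\ts{B})$ for some finite set $\ts{B}$ of (proper) barred patterns, I derive a contradiction by producing a permutation in $\av(\ts{B})$ whose classical contents force it out of $\av(p)$, or vice versa.

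First I would reduce to the case $|\ts{B}|=1$ is not enough; instead I would argue directly about an arbitrary set. Pick any $\b \in \ts{B}$. Since $\b$ is proper, $\class{\b}$ has strictly more letters than its unbarred portion, and by Lemma~\ref{lem:x in av(x)} we have $\class{\b} \in \av(\b)$. The key sub-claim is that $\av(\ts{B})$ is infinite and, more importantly, that it contains permutations avoiding \emph{any} prescribed classical pattern of a given length: for a barred pattern, a sufficiently ``sparse'' permutation (e.g.\ one realizing only increasing runs, or the identity $12\cdots N$, or a generic layered permutation) avoids $\b$ because every occurrence of the unbarred portion can be completed to a full $\class{\b}$-occurrence inside such a structured permutation — here one must check that the chosen family of permutations has enough room to always supply the missing barred letters. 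If $\av(p) = \av(\ts{B})$, then in particular $p \notin \av(\ts{B})$ (since $p \notin \av(p)$), so some $\b \in \ts{B}$ is contained in $p$ in the barred sense; but then I would compare sizes: $\class{\b} \in \av(\ts{B}) = \av(p)$ means $p \not\le \class{\b}$ classically, while $\b$ being barred-contained in $p$ forces the unbarred portion of $\b$ (a classical pattern strictly shorter than $\class{\b}$) to sit inside $p$. Tracking these containments and using that $\av(p)$ is closed downward gives the contradiction: I would exhibit an explicit small witness — a permutation obtained from $\class{\b}$ by adding or deleting a point — that lands on the wrong side of exactly one of the two classes.

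The cleanest route, which I would ultimately take, is the following: suppose $\av(p)=\av(\ts{B})$. Since $p\notin\av(p)$, pick $\b\in\ts{B}$ with $p$ containing $\b$ (barred sense); let $u$ be the unbarred portion of $\b$, so $u$ occurs in $p$ but $|u|<|\class{\b}|$. On the other hand $\class{\b}\in\av(\b)$; I claim $\class{\b}\in\av(\ts{B})$, which would force $\class{\b}\in\av(p)$, i.e.\ $p$ does not occur in $\class{\b}$. Now I build a permutation $w$ by ``padding'' $\class{\b}$ — inserting enough new large/small values to destroy every full $\class{\b}$-occurrence while keeping the occurrence of $u$ — so that $w$ contains $\b$ (hence $w\notin\av(\ts{B})=\av(p)$, so $p$ occurs in $w$) yet, by sparseness, $w$ still avoids $p$ (contradiction). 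The main obstacle is the middle step: showing $\class{\b}\in\av(\b')$ for \emph{every} $\b'\in\ts{B}$ simultaneously, and designing the padded $w$ so that it simultaneously avoids all barred patterns in $\ts{B}$ except picking up $\b$; this requires a careful combinatorial choice of the inserted values (for instance, a sufficiently long run of consecutive values placed far above or below everything else, so that it cannot participate in any short pattern while guaranteeing the completion of $u$ to $\class{\b}$). Handling the endpoint-adjacent and value-adjacency constraints that distinguish a generic occurrence from a ``completable'' one is where the real care lies, but the size comparison $|u| < |\class{\b}|$ driven by properness is what ultimately makes the argument go through.
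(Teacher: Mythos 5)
Your proposal is not a complete proof: it explicitly flags its two central steps --- establishing $\class{\b}\in\av(\b')$ for every $\b'\in\ts{B}$, and constructing the padded permutation $w$ --- as unresolved ``obstacles,'' and both are genuinely problematic. The paper's argument instead turns on a single counting observation that you never make. Writing $n$ for the length of $p$: since $\av(p)=\av(\ts{B})$ contains every permutation of fewer than $n$ letters, and since the permutation order-isomorphic to the unbarred portion of any $\b\in\ts{B}$ always \emph{contains} $\b$ (being too short to contain a full $\class{\b}$-pattern, by properness), every $\b\in\ts{B}$ must have at least $n$ unbarred letters. Because $p\notin\av(\ts{B})$, some $\b\in\ts{B}$ has its unbarred portion occurring inside $p$, and the length bound then forces that unbarred portion to be order-isomorphic to $p$ itself. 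Hence $\class{\b}$ contains $p$, so $\class{\b}\notin\av(p)=\av(\ts{B})$, and the contradiction is obtained against $\class{\b}\in\av(\b)$ from Lemma~\ref{lem:x in av(x)}. Your ``size comparison'' records only that $u$ occurs in $p$ and that $|u|<|\class{\b}|$; without the lower bound $|u|\ge n$ you cannot conclude $u\approx p$, and the argument stalls exactly where you say it does.

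Two further concrete problems. First, your intermediate claim $\class{\b}\in\av(\ts{B})$ points in the wrong direction: once the length bound is in place, $\class{\b}$ contains $p$ and therefore $\class{\b}\notin\av(p)=\av(\ts{B})$, so the statement you propose to prove is inconsistent with the hypothesis you are trying to refute by other means; the useful fact is the single-pattern membership $\class{\b}\in\av(\b)$, which Lemma~\ref{lem:x in av(x)} hands you for free. Second, the padding construction cannot work as described: inserting new values into $\class{\b}$ can never ``destroy every full $\class{\b}$-occurrence,'' because classical occurrences are subsequences and persist under insertion. (What insertion can do is create \emph{new}, non-extendable occurrences of the unbarred portion, but that is a different mechanism from the one you describe, and you give no way to control all the other elements of $\ts{B}$ simultaneously.) The intuition in your opening paragraph --- that $\av(p)$ is closed downward under pattern containment while barred classes need not be --- is the right one, but the machinery you propose to exploit it does not close, whereas the paper's short length-count does.
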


\begin{proof}
Suppose, to the contrary, that there is such a $\ts{B}$.  Let $\b \in \ts{B}$ have minimally many unbarred letters.  If $p$ is a permutation of $[n]$, then $\av(p)$ contains all permutations of $[k]$ for each $k<n$.  Thus $\b$ must have at least $n$ unbarred letters.  Because $p \not\in \av(p)$, the unbarred portion of $\b$ must actually be a $p$-pattern.  But then $\class{\b} \not\in \av(p) = \av(\ts{B}) \subseteq \av(\b)$, contradicting Lemma~\ref{lem:x in av(x)}.
\end{proof}

On the other hand, as pointed out in \cite{steingrimsson}, there are some situations where avoidance of a barred pattern is equivalent to avoidance of a suitably chosen vincular pattern.

\begin{ex}\label{ex:bondable}\
\begin{itemize}
\item $\av(41\ol{3}52) = \av(3\ub{14}2)$.
\item $\av(25\ol{3}14) = \av(2\ub{41}3)$.
\item $\av(21\ol{3}54) = \av(2\ub{14}3)$.
\end{itemize}
\end{ex}

The veracity of the three parts of Example~\ref{ex:bondable} is easy to show, and we prove one of them here.  Proofs of the other two are entirely analogous.

\begin{lem}\label{lem:bondable ex}
$\av(41\ol{3}52) = \av(3\ub{14}2)$.
\end{lem}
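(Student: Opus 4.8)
The plan is to prove the two inclusions separately by contrapositive, i.e.\ to show that a permutation $w$ contains $41\ol{3}52$ if and only if $w$ contains $3\ub{14}2$. First I would unwind the definitions into a usable form. An occurrence of the unbarred portion of $41\ol{3}52$ in $w$ is a subsequence in positions $p_1 < p_2 < p_3 < p_4$ with $w_{p_2} < w_{p_4} < w_{p_1} < w_{p_3}$ (a $3142$-pattern, since the unbarred letters $4152 \approx 3142$), and such an occurrence fails to be part of a $41352$-pattern exactly when there is no position $q$ with $p_2 < q < p_3$ and $w_{p_4} < w_q < w_{p_1}$. An occurrence of $3\ub{14}2$ is a $3142$-pattern in positions $r_1 < r_2 < r_3 < r_4$ subject to the extra constraint $r_3 = r_2 + 1$.

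For the easy inclusion $\av(41\ol{3}52) \subseteq \av(3\ub{14}2)$, I would argue the contrapositive: if $r_1 < r_2 < r_3 < r_4$ is an occurrence of $3\ub{14}2$, then it is already an occurrence of the unbarred portion of $41\ol{3}52$, and because $r_3 = r_2 + 1$ there is no position $q$ with $r_2 < q < r_3$, so this occurrence cannot be extended to a $41352$-pattern. Hence $w$ contains $41\ol{3}52$.

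The substance of the lemma is the reverse inclusion $\av(3\ub{14}2) \subseteq \av(41\ol{3}52)$, equivalently: from a non-extendable $3142$-occurrence $p_1 < p_2 < p_3 < p_4$ in $w$, produce a bonded one. The key observation is that no position $q$ with $p_2 < q < p_3$ can have value in the open interval $(w_{p_4}, w_{p_1})$; since values are distinct and $w_{p_1}, w_{p_4}$ are attained outside this range of positions, every such $w_q$ is either \emph{low} (below $w_{p_4}$) or \emph{high} (above $w_{p_1}$). Now $w_{p_2}$ is low (it plays the role of ``$1$'' and $w_{p_4}$ plays ``$2$'') while $w_{p_3}$ is high, so scanning $p_2, p_2{+}1, \dots, p_3$ one must cross from low to high at some adjacent pair: let $j$ be the largest position in $[p_2,p_3)$ with $w_j$ low. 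Then $(p_1, j, j+1, p_4)$ is an occurrence of $3\ub{14}2$ — the positions are strictly increasing, $j$ and $j+1$ are adjacent, and the values satisfy $w_j < w_{p_4} < w_{p_1} < w_{j+1}$.

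I expect the only point requiring care is checking that $w_{j+1} > w_{p_1}$ in both cases $j+1 < p_3$ and $j+1 = p_3$: in the former, $w_{j+1}$ is one of the intermediate values, hence low or high, and maximality of $j$ forces it to be high; in the latter, $w_{j+1} = w_{p_3}$, which is high by hypothesis. Everything else is bookkeeping with the inequalities, and the analogous patterns $25\ol{3}14$ and $21\ol{3}54$ from Example~\ref{ex:bondable} would be handled by the same scan argument applied to the relevant ``gap'' interval.
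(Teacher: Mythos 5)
Your proposal is correct and follows essentially the same route as the paper: the easy inclusion is identical, and for the reverse inclusion both arguments exploit that a non-extendable $3142$-occurrence forces every value positioned between the ``$1$'' and the ``$4$'' to be either below the ``$2$'' or above the ``$3$,'' so that a low value and a high value must sit adjacently. The only cosmetic difference is that the paper reaches the adjacent pair by an extremal choice of occurrence (minimizing the gap between the ``$1$'' and ``$4$''), whereas you scan a fixed occurrence for the low-to-high crossing; both are valid.
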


\begin{proof}
Suppose $w \in \av(41\ol{3}52)$.  Thus any occurrence of $3142$ in $w$ must have a relative value of $2.5$ sitting between the ``$1$'' and the ``$4$'' in that pattern.  Thus $w$ avoids the vincular pattern $3\ub{14}2$, and so $w \in \av(3\ub{14}2)$.  Hence $\av(41\ol{3}52) \subseteq \av(3\ub{14}2)$.

If $w \not\in \av(41\ol{3}52)$, then there is a $3142$-pattern in $w$ that is not part of any $41352$-pattern in $w$.  Choose this occurrence of $3142$ in $w$ so that the positions of ``$1$'' and ``$4$'' are as close together in $w$ as possible.  This ensures that no values less than the ``$2$,'' nor greater than the ``$3$,'' appear between this ``$1$'' and ``$4$.''  Because this $3142$-pattern is not part of a $41352$-pattern, no values between the ``$2$'' and the ``$3$'' appear in any of these positions either.  Thus the ``$1$'' and ``$4$'' must actually be adjacent, and so we have an occurrence of $3\ub{14}2$.  Hence $w \not\in \av(3\ub{14}2)$, and so $\av(41\ol{3}52) \supseteq \av(3\ub{14}2)$.
\end{proof}

In contrast to Example~\ref{ex:bondable}, it is not the case that avoidance of a barred pattern is always equivalent to avoidance of a finite set of vincular patterns.

\begin{lem}\label{lem:not bondable}
There is no finite set of vincular patterns $\ts{V}$ for which $\av(1\ol{2}3) = \av(\ts{V})$.
\end{lem}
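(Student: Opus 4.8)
We need to show that $\av(1\ol{2}3)$ — the set of permutations in which every ascent (every $12$-pattern) is contained in some $123$-pattern — cannot be written as $\av(\ts{V})$ for any \emph{finite} set $\ts{V}$ of vincular patterns. Note that $\av(1\ol{2}3)$ consists exactly of permutations in which, whenever $w_i < w_j$ for $i < j$, there is some $k > j$ with $w_k > w_j$; equivalently, every non-right-to-left-maximum is the bottom of no "unextendable" ascent — concretely, $w \in \av(1\ol{2}3)$ iff for every position $i$ that is not a right-to-left maximum, there is a larger value to its right, which is automatic, so the real condition is that every value which has something smaller before it also has something larger after it. A clean family living inside this class: the permutations $\iota_n = 1\,2\,\cdots\,n$ are \emph{not} in $\av(1\ol{2}3)$ for $n \ge 2$ (the ascent $n-1,n$ has nothing larger after it), while the "almost increasing" permutations $2\,3\,4\,\cdots\,n\,1$ — wait, these also fail. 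A better separating family: let $\sigma_n = n\,1\,2\,3\,\cdots\,(n-1)$. Here the ascents among $1,2,\dots,n-1$ all get topped by $n-1$? No — the ascent $(n-2)(n-1)$ is unextendable. So $\sigma_n \notin \av(1\ol{2}3)$ either. The actual members of $\av(1\ol{2}3)$ of large size must have \emph{no} unextendable ascent, e.g. decreasing permutations $n(n-1)\cdots 1$.

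\begin{proof}
Suppose for contradiction that $\av(1\ol{2}3) = \av(\ts{V})$ for a finite set $\ts{V}$ of vincular patterns, and let $m$ be the maximum number of letters appearing in any pattern of $\ts{V}$. We will exhibit a permutation $u \notin \av(1\ol{2}3)$ that nevertheless lies in $\av(\ts{V})$, a contradiction.

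Consider, for $N$ large (to be specified, $N > m$), the permutation
\[
  u \;=\; u^{(N)} \;=\; (N-1)\,(N-2)\,\cdots\,2\,1\,N,
\]
that is, the decreasing sequence $N-1, N-2, \dots, 1$ followed by the single largest value $N$. First, $u \notin \av(1\ol{2}3)$: the subsequence $1\,N$ is a $12$-pattern (indeed any $a\,N$ with $a<N$ is), and it is not contained in any $123$-pattern because there is no letter to the right of $N$ and nothing smaller than $N$ occurs after the "$1$" except... there is nothing after $N$ at all, so the ascent $1\,N$ (at positions $N-1$ and $N$) cannot be extended; hence $u$ contains $1\ol{2}3$ and $u \notin \av(1\ol{2}3) = \av(\ts{V})$.

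So some $\v \in \ts{V}$ occurs in $u$. The key structural facts about $u$ are: (i) $u$ has exactly one ascent, at the last step (value $1$ then value $N$); (ii) the only increasing subsequences of $u$ have length $\le 2$, and a length-$2$ increasing subsequence is exactly a pair $(a, N)$ with $1 \le a \le N-1$, i.e.\ it uses the final position; (iii) $\class{\v}$ is a classical pattern on $k \le m$ letters and, since $\v$ is proper, $k \ge 2$. Now analyze how $\class{\v}$ can occur in $u$. If $\class{\v}$ had an ascent in positions $i<j$ of the pattern with $j$ not the last letter of the pattern, the matched subsequence in $u$ would contain a length-$2$ increasing subsequence not using $u$'s final position — impossible by (ii). Hence $\class{\v}$, read left to right, is decreasing except possibly for a final ascent to its last letter; so either $\class{\v}$ is strictly decreasing, or $\class{\v} = (k-1)(k-2)\cdots 1\,k$ for some $k \le m$ (the unique pattern on $k$ letters whose only ascent is at the end with maximal last value).

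In either case I claim $\v$ cannot occur in $u^{(N)}$ once $N$ is large enough, contradicting the choice of $\v$. Indeed, an occurrence of $\v$ in $u$ picks positions $p_1 < p_2 < \dots < p_k$ with $u$ restricted to them order-isomorphic to $\class{\v}$, and with the bonded substrings of $\v$ mapped to \emph{consecutive} positions of $u$, and any $*$-bond forcing the use of the first or last position of $u$. Since $\v$ is proper it has a bond between two of the $k$ letters (or a $*$-bond); in all cases this forces two of the chosen positions $p_t < p_{t+1}$ to be \emph{adjacent} in $u$, i.e.\ $p_{t+1} = p_t + 1$, with prescribed relative order of $u_{p_t}$ and $u_{p_{t+1}}$ coming from $\v$. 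But in $u$ every pair of adjacent positions except the very last pair is a \emph{descent}, while the last pair is an ascent; and whether the bond in $\v$ demands an ascent or a descent at that spot, matching it pins $t$ to a specific place — and then the \emph{remaining} $k-1$ letters of $\class{\v}$, which (by the previous paragraph's description of $\class{\v}$) form a decreasing run possibly with a terminal jump to $N$, must be realized to the left and/or right of that pinned adjacent pair, all among the decreasing block $N-1,\dots,1$ of $u$. A routine case check (on whether $\class{\v}$ is decreasing or is $(k-1)\cdots 1\,k$, and on which letter the bond straddles) shows that for $N$ large this cannot be completed: the decreasing portion of $\class{\v}$ needs enough room among $\{N-1,\dots,1\}$, and simultaneously the bond constraint either forces an adjacency among $\{N-1,\dots,1\}$ that is consistent (so $\v$ \emph{does} occur) — the point is that if $\v$ occurred in \emph{some} $u^{(N)}$ then by the homogeneity of the block $N-1,\dots,1$ it occurs in $u^{(N')}$ for all $N' \ge N$, hence $u^{(N')} \notin \av(\ts{V}) = \av(1\ol{2}3)$ for all large $N'$; but for $N' \ge k+1$ the decreasing block alone contains a $\class{\v}$-occurrence \emph{without} using the final ascent, so either $\v$ is avoided by $u^{(N')}$ (if $\v$ needs the final ascent) or $\v$ occurs already inside a strictly decreasing permutation $N'-1,\dots,1,?$— and a strictly decreasing permutation of length $\ge 2$ \emph{does} lie in $\av(1\ol{2}3)$ (it has no ascent at all), giving the contradiction $\av(\ts{V}) \ne \av(1\ol{2}3)$.
\end{proof}

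\bigskip

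\noindent\textbf{Remark on the plan.} The skeleton above is the honest idea — test $\av(1\ol{2}3)$ against a \emph{scalable} family $u^{(N)}$ that sits just outside the class, and use that a finite vincular set only "sees" windows of bounded size $m$, so its behavior on $u^{(N)}$ stabilizes in $N$; then derive a contradiction by comparing with a genuinely decreasing permutation (which \emph{is} in $\av(1\ol{2}3)$). The part I would write most carefully — and the main obstacle — is making the last paragraph clean: one wants a single, crisp invariant of $u^{(N)}$ (e.g. "its longest increasing subsequence has length $2$ and every length-$2$ increasing subsequence ends at the final position") that (a) is preserved under enlarging $N$, (b) forces any proper vincular pattern occurring in $u^{(N)}$ to occur already in some bounded-size truncation, and (c) is shared by a strictly decreasing permutation up to the relevant size. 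Likely the cleanest route is: take $\v \in \ts{V}$ with $|\v| = k \le m$; distinguish whether an occurrence of $\v$ in $u^{(N)}$ must use the final letter $N$ of $u^{(N)}$. If it need not, the occurrence lies in the decreasing prefix, so it lies in the strictly decreasing permutation $w = (k+1)(k)\cdots 1 \in \av(1\ol 2 3)$, contradicting $w \in \av(\ts V)$. If every occurrence must use the final $N$ (this happens iff $\v$ has a $*$-bond on the right and/or $\class{\v}$ is forced to end with its largest letter at a forced-consecutive spot), then $\v$ is avoided by the strictly decreasing permutation — but we then need a \emph{different} witness outside $\av(1\ol 2 3)$ on which this "right-anchored" $\v$ also fails; appending \emph{two} large values, $u = (N-2)(N-3)\cdots 1\, N (N-1)$ or interleaving, and rerunning the dichotomy handles that family too. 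I would organize the final write-up as this clean two-case (decreasing-prefix vs.\ right-anchored) dichotomy rather than the single messy paragraph above.
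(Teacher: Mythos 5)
Your overall strategy --- pitting a scalable family of permutations just outside $\av(1\ol{2}3)$ against the bounded ``window size'' of a finite set $\ts{V}$ --- is the right instinct, and it is in the same spirit as the paper's proof (which uses the family $n(n-1)\cdots 4312$). But as written the argument has a genuine gap, and the claim at its center is false. You assert that once $N$ is large, no $\ts{v}\in\ts{V}$ can occur in $u^{(N)}=(N-1)(N-2)\cdots 21N$. This fails immediately: the vincular patterns $\ub{12}$ and $\ub{12\pt}$ occur in $u^{(N)}$ for every $N\ge 2$ (the last two letters $1,N$ are adjacent and form an ascent ending at the final position), as does, say, $2\ub{13}$ for every $N\ge 3$. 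So if $\ts{V}$ contains such a pattern, your family never lands in $\av(\ts{V})$ and yields no contradiction. Your fallback --- comparing with strictly decreasing permutations, which do lie in $\av(1\ol{2}3)$ --- cannot exclude these patterns from $\ts{V}$ either, because strictly decreasing permutations also avoid them; that comparison is a consistency check, not a contradiction. To rule out $\ub{12\pt}$ you must exhibit a permutation of $\av(1\ol{2}3)$ that \emph{contains} it (e.g.\ $123$), and to rule out each partially bonded pattern with underlying classical pattern $(k-1)\cdots 1k$ you must exhibit a tailored insertion witness inside $\av(1\ol{2}3)$ (e.g.\ $(k-1)\cdots 1k(k+1)$ when the bond to the right endpoint is missing). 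Your write-up constructs none of these witnesses; the concluding ``routine case check'' and the remark's proposal to ``append two large values and rerun the dichotomy'' defer exactly this, which is the hard part of the lemma.

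The paper's proof shows what that hard part looks like: it first forces $\ub{\pt 12\pt}\in\ts{V}$ using the witnesses $1$ and $123$, and then proves by induction that $\ts{V}$ must contain the \emph{fully bonded} pattern $\ub{\pt n(n-1)\cdots 4312\pt}$ for every $n$, by checking bond by bond that if any bond were absent one could insert letters into $n(n-1)\cdots4312$ to produce a member of $\av(1\ol{2}3)$ still containing the pattern. Your family $(N-1)\cdots 21N$ could in principle support the same induction, but only after the analogous bond-forcing analysis is actually carried out; without it, the proof is incomplete.
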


\begin{proof}
Suppose, to the contrary, that there is such a finite set $\ts{V}$.

Because $1, 123 \in \av(1\ol{2}3)$ but $12 \not\in \av(1\ol{2}3)$, we must have $\ub{\pt 12\pt} \in \ts{V}$.  Moreover, because $123 \in \av(1\ol{2}3)$, this is the only element of $\ts{V}$ whose underlying classical pattern is $12$.

Fix an integer $n>0$ and suppose, inductively, that for all $0 \le k < n$, there is a unique $\mathsf{v_k} \in \ts{V}$ with $\class{\mathsf{v_k}} = k(k-1)\cdots 4312$, and that in fact $\mathsf{v_k} = \ub{\pt k(k-1)\cdots 4312\pt}$.

Consider $u = n(n-1)\cdots 4312$.  Because $u \not\in \av(1\ol{2}3)$, it must contain some $\v \in \ts{V}$.  Because $\av(1\ol{2}3) = \av(\ts{V})$, Lemma~\ref{lem:x in av(x)} implies $\v \not\in \av(1\ol{2}3)$.  Thus $\class{\v} = \class{\mathsf{v_k}}$, for some $k \le n$.  The inductive hypothesis implies that $k=n$, so $\class{\v} = u$ itself.  It remains only to determine the bonds in $\v$.

For any $i \in [n-2]$, inserting $(-1)0$ immediately before the $i$th letter in $u$ yields an element of $\av(1\ol{2}3)$.  This ensures that the $(i-1)$st and $i$th letters must be bonded in $\v$ (where the $9$th letter is the left endpoint).  Moreover,
$$n(n-1)\cdots 4312{(n+1)(n+2)} \in \av(1\ol{2}3),$$
so there must also be a bond between the rightmost letter in $\v$ and the right endpoint.  It now remains to determine whether $1$ is bonded to either of its neighbors in $\v$.  If at least one of these bonds does not exist, then $\v$ would be contained in 
$$(n+1)n\cdots 54123 \in \av(1\ol{2}3).$$
Thus $1$ must be bonded to both of its neighbors, and so $\v = \ub{\pt n(n-1)\cdots 4312\pt}$.  Therefore, by induction, $\ub{\pt n(n-1)\cdots 4312\pt} \in \ts{V}$ for all $n \ge 0$, contradicting the assumption that $\ts{V}$ was a finite set.
\end{proof}

In order to classify Lemmas~\ref{lem:bondable ex} and~\ref{lem:not bondable}, we introduce the following terminology.

\begin{defn}
Suppose that $\ts{B}$ is a finite set of barred patterns and that $\ts{V}$ is a finite set of vincular patterns.  If $\av(\ts{B}) = \av(\ts{V})$, then $\ts{B}$ and $\ts{V}$ are \emph{coincident}.  If $\ts{B}$ (respectively, $\ts{V}$) is a finite set of barred (respectively, vincular) patterns for which such a set $\ts{V}$ (respectively, $\ts{B}$) exists, then $\ts{B}$ (respectively, $\ts{V}$) is \emph{coincidental}.  If $\ts{B}$ (respectively, $\ts{V}$) is a coincidental set containing just a single element, then that pattern itself is \emph{coincidental}.
\end{defn}

In \cite{steingrimsson}, Steingr\'imsson posed the problem of classifying all coincidental barred patterns.

\begin{ex}
The barred patterns $41\ol{3}52$, $25\ol{3}14$, and $21\ol{3}54$ are each coincidental.  The vincular patterns $3\ub{14}2$, $2\ub{41}3$, and $2\ub{14}3$ are each coincidental.  The sets $\{41\ol{3}52\}$ and $\{3\ub{14}2\}$ are coincident.  The barred pattern $1\ol{2}3$ is not coincidental.
\end{ex}

In the present work, we will refine the notion of a coincidental barred permutation to what we call ``naturally coincidental,'' or ``nat-co.''  This concept will be made precise in Definition~\ref{defn:bondable}, and is intended to capture the particular prohibitions of the particular barred pattern.

Of the two characterizations, it is optimal to start with coincidental vincular patterns because, as shown in Proposition~\ref{prop:co vinc has |B|=1}, each such pattern is coincident with a single (and, as it turns out, naturally coincidental) barred pattern.  Thus both characterizations can be addressed by examining naturally coincidental barred patterns (Theorem~\ref{thm:main bv}).

\section{Preparation for coincidental vincular patterns}\label{section:prep vb}

\begin{defn}\label{defn:bar(v)}
For a vincular pattern $\v$, let $\bar{\v}$ be the set of barred patterns obtained by replacing a single bond of $\v$ by a barred symbol so that the result does not contain $\v$, and ignoring any remaining bonds.
\end{defn}

\begin{ex}\
\begin{itemize}
\item $\bar{3\ub{14}\ub{2\pt}} = \{41\ol{3}52,\ 4253\ol{1},\ 3152\ol{4},\ 3142\ol{4}\}$.
\item $\bar{3\ub{142}} = \{41\ol{3}52,\ 31\ol{4}52,\ 31\ol{5}42,\ 425\ol{1}3,\ 315\ol{4}2,\ 314\ol{5}2\}$.
\end{itemize}
\end{ex}

Lemma~\ref{lem:x in av(x)} and the definition of $\bar{\v}$ immediately imply the following result.

\begin{cor}\label{cor:bar is avoided}
For any vincular pattern $\v$,
$$\{\class{\b} : \b \in \bar{\v}\} \subseteq \big(\av(\v)\cap \av(\bar{\v})\big).$$
\end{cor}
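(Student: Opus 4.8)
The corollary is proved by unwinding definitions, one pattern at a time. Fix $\b \in \bar{\v}$ and set $u := \class{\b}$, the permutation obtained from $\b$ by erasing its bars; since $\{\class{\b} : \b \in \bar{\v}\}$ is exactly the collection of these $u$, it suffices to put each such $u$ into $\av(\v)$ and into $\av(\bar{\v})$. Membership in $\av(\v)$ is built into Definition~\ref{defn:bar(v)}: the elements of $\bar{\v}$ are by definition those barred patterns obtained from $\v$ by replacing a single bond with a barred symbol and ignoring any remaining bonds, \emph{subject to the requirement that ``the result does not contain $\v$''} --- and that requirement is precisely the assertion that the underlying permutation $\class{\b} = u$ avoids the vincular pattern $\v$. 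So $\{\class{\b} : \b \in \bar{\v}\} \subseteq \av(\v)$ needs no further argument.

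Membership in $\av(\bar{\v}) = \bigcap_{\b' \in \bar{\v}} \av(\b')$ asks that $u$ avoid every pattern of $\bar{\v}$. The diagonal case $\b' = \b$ is settled at once by Lemma~\ref{lem:x in av(x)}, which gives $u = \class{\b} \in \av(\b)$: the occurrence of the unbarred part of $\b$ inside $u$ is part of the copy of $\class{\b} = u$ in itself. For the off-diagonal patterns $\b' \in \bar{\v}$ with $\b' \neq \b$ one wants $u \in \av(\b')$ too, and I would get this by proving the stronger inclusion $\av(\v) \subseteq \av(\b')$ for every $\b' \in \bar{\v}$ and combining it with the previous paragraph.

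The inclusion $\av(\v) \subseteq \av(\b')$ --- equivalently, that containing $\b'$ forces containing $\v$ --- is the step I expect to be the main obstacle, and it is the barred-to-vincular argument already carried out in the proof of Lemma~\ref{lem:bondable ex}: given $w$ containing $\b'$, pick an occurrence of $\class{\v}$ witnessing this with the gap at the replaced bond as short as possible; minimality excludes any letter in that gap whose value would merely pull the occurrence inward, while the hypothesis that the occurrence is not part of a $\class{\b'}$-copy excludes any letter in the gap with value in the range taken by the inserted barred symbol, leaving the gap empty so that the bond of $\v$ is met. When $\v$ has a single bond this is routine --- and in that case, as in the one-bond examples, $\bar{\v}$ is a singleton, so the off-diagonal check never arises and the corollary really is immediate from Lemma~\ref{lem:x in av(x)} and Definition~\ref{defn:bar(v)}. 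With several bonds the bookkeeping is more delicate, since replacing one bond does not obviously pin down the behavior of the others.
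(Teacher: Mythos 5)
Your first two steps are exactly the paper's proof: membership in $\av(\v)$ is the defining clause of $\bar{\v}$ (Definition~\ref{defn:bar(v)}), and $\class{\b}\in\av(\b)$ is Lemma~\ref{lem:x in av(x)}; the paper offers nothing beyond these two observations and declares the corollary immediate. So you have correctly isolated the one step that is \emph{not} immediate, namely the off-diagonal containment $\class{\b}\in\av(\b')$ for distinct $\b,\b'\in\bar{\v}$.

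The difficulty is that the route you propose for that step cannot work, because the off-diagonal containment is false. Take the paper's own example $\v=3\ub{14}\ub{2\pt}$, with $\b=41\ol{3}52$ and $\b'=4253\ol{1}$, both in $\bar{\v}$. The permutation $\class{\b}=41352$ has exactly one occurrence of the unbarred portion $4253\approx 3142$ of $\b'$, namely the subsequence $4,1,5,2$ in positions $1,2,4,5$; since $41352$ contains no occurrence of $\class{\b'}=42531$ at all (it is a $5$-letter permutation not order isomorphic to $42531$), that occurrence is not part of any $\class{\b'}$-pattern, and so $41352$ \emph{contains} $\b'$. Hence $\class{\b}\notin\av(\b')\supseteq\av(\bar{\v})$, and your proposed inclusion $\av(\v)\subseteq\av(\b')$ fails, since $41352\in\av(\v)$ by construction. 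This is not a bookkeeping problem to be fixed by a sharper minimality argument: the displayed statement, read literally with $\av(\bar{\v})=\bigcap_{\b'\in\bar{\v}}\av(\b')$, fails whenever $\bar{\v}$ contains two patterns with distinct underlying permutations, because an $(n+1)$-letter permutation $\class{\b}\neq\class{\b'}$ can never extend its copy of $\class{\v}$ to a copy of the $(n+1)$-letter pattern $\class{\b'}$. The paper itself relies on exactly this failure in the proof of Proposition~\ref{prop:co vinc has |B|=1}, where $\class{\b}\notin\av(\b')$ drives the contradiction. The corollary is only used, and only provable, in its elementwise form --- for each $\b\in\bar{\v}$, $\class{\b}\in\av(\v)\cap\av(\b)$ --- which is precisely what your first paragraph plus the diagonal case already establish. (Your side remark that a single-bond $\v$ forces $|\bar{\v}|=1$ is also not quite right, e.g.\ $\bar{\ub{13}2}=\emptyset$, though in that situation the intersection reading is harmless.) You should stop after the elementwise argument rather than chase $\av(\v)\subseteq\av(\b')$.
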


Elements of $\bar{\v}$ are those barred patterns whose bars reflect the vincular nature of $\v$.  

\begin{prop}\label{prop:co vinc has |B|=1}
If $\v$ is a coincidental vincular pattern, then $|\bar{\v}| = 1$.
\end{prop}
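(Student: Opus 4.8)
The plan is to first promote coincidence with a finite set of barred patterns to coincidence with a \emph{single} barred pattern that literally occurs in $\class{\v}$, and then to read off that $\bar{\v}$ can contain nothing else.

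First I would record that $q:=\class{\v}$ is the unique minimal permutation (under classical containment) containing $\v$: the identity occurrence of $q$ in itself realizes every bond, so $q$ contains $\v$, while any permutation containing $\v$ contains $q$ classically. Now fix a finite set $\ts{B}$ of barred patterns with $\av(\ts{B})=\av(\v)$. Since $q\notin\av(\v)=\av(\ts{B})$ by Lemma~\ref{lem:x in av(x)}, some $\b_0\in\ts{B}$ occurs in $q$. I claim the witnessing occurrence of the unbarred portion of $\b_0$ must be all of $q$: were it a proper subsequence, the standalone permutation isomorphic to it would itself contain $\b_0$ (being too short to contain $\class{\b_0}$), hence would fall outside $\av(\ts{B})=\av(\v)$ and so contain $\v$, contradicting minimality of $q$. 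Thus the unbarred portion of $\b_0$ is $\approx q$; write $r:=\class{\b_0}$.

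Next I would show $\b_0$ has exactly one bar and that $\av(\v)=\av(\b_0)$. Take any $\b\in\bar{\v}$ (it has one bar, and unbarred portion $\approx q$). By Corollary~\ref{cor:bar is avoided}, $\class{\b}\in\av(\v)=\av(\ts{B})\subseteq\av(\b_0)$, so the copy of $q$ inside $\class{\b}$ must be part of a $\class{\b_0}$-pattern there; since $|\class{\b}|=|q|+1$ and $|\class{\b_0}|\ge|q|+1$, this forces $|\class{\b_0}|=|q|+1$, i.e.\ $\b_0$ has one bar, and also $\class{\b}\approx r$. (In particular $r\in\av(\v)$ and $\b_0$ is itself a bond-breaking of $\v$, so $\b_0\in\bar{\v}$ and $\bar{\v}\neq\varnothing$.) Thus $r$ is $q$ with one extra letter placed at a \emph{bond site} of $\v$ relative to that copy of $q$ --- the gap between two bonded consecutive symbols, or the slot off the end next to $\pt$ when $\pt$ is bonded. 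For the equality of avoidance classes: if $w$ contains $\v$, pick a $q$-occurrence $Q$ in $w$ realizing all bonds; enlarging $Q$ by one position to an $r$-pattern would, by the length count just used, force that position to lie at a bond site relative to $Q$, which is impossible since $Q$ realizes every bond (the flanking letters are adjacent, or sit at the end of $w$). Hence $Q$ lies in no $r$-pattern, so $w$ contains $\b_0$; together with $\av(\v)=\av(\ts{B})\subseteq\av(\b_0)$ this gives $\av(\v)=\av(\b_0)$.

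Consequently every member of $\bar{\v}$ is $r$ with one letter barred, that letter being one whose removal from $r$ recovers $q$; conversely, because $r\in\av(\v)$, every such letter must lie at a bond site (otherwise the resulting copy of $q$ in $r$ would realize all of $\v$), so it does yield a member of $\bar{\v}$. Thus $|\bar{\v}|$ equals the number of occurrences of $q$ inside $r$. The last step, which I expect to be the main obstacle, is to show $q$ occurs only once in $r$: supposing a second occurrence $r\setminus y\approx q$ (with $y$ distinct from the barred letter of $\b_0$ and again at a bond site), one must analyze the two overlapping copies of $q$ in $r$ to extract either a proper sub-pattern of $q$ that realizes $\v$ (contradicting minimality of $q$) or a permutation realizing $\v$ while avoiding $\b_0$ (contradicting $\av(\v)=\av(\b_0)$). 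Carrying out that bookkeeping --- and, relatedly, removing the mild circularity above by first establishing $\class{\b_0}\in\av(\v)$ directly --- is where the work concentrates.
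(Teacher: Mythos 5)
Your route is genuinely different from the paper's, and most of it is sound, but as submitted it has two real gaps. The first is the one you flag yourself: the claim that $q$ occurs only once in $r$ (equivalently, that only one letter of $r$ can be barred to produce an element of $\bar{\v}$) is stated as the remaining obstacle and never proved, and everything before it only bounds $|\bar{\v}|$ by the number of such occurrences. The second is the circularity you also acknowledge: your only argument that $\bar{\v}\neq\emptyset$ sits inside a paragraph that begins ``take any $\b\in\bar{\v}$,'' so the case $|\bar{\v}|=0$ is never excluded. The paper handles that case separately and cheaply: if $\bar{\v}=\emptyset$ then $\av(\v)=\av(\class{\v})$, and Lemma~\ref{lem:barred not equiv to classical} rules out coincidence with any set of barred patterns.

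For the first gap, note two things. The paper never meets this obstacle because it runs the contradiction differently: for distinct $\b,\b'\in\bar{\v}$ it shows both must lie in $\ts{B}$ and then observes that the copy of $q$ inside $\class{\b}$ cannot be part of a $\class{\b'}$-pattern when $\class{\b}$ and $\class{\b'}$ are distinct permutations of length $n+1$, so $\class{\b}\notin\av(\b')\supseteq\av(\ts{B})=\av(\v)$, contradicting Corollary~\ref{cor:bar is avoided}. More to the point, your own machinery already closes the gap if you track roles rather than counting copies of $q$: given any $\b\in\bar{\v}$ with inserted (barred) letter $y$, the occurrence $r\setminus y\approx q$ of the unbarred portion of $\b_0$ inside $\class{\b}=r\in\av(\b_0)$ must extend to a $\class{\b_0}$-occurrence in which $r\setminus y$ occupies precisely the \emph{unbarred} positions of $\b_0$; since the only occurrence of the length-$(n+1)$ pattern $r$ in the length-$(n+1)$ permutation $r$ is the identity occurrence, this forces $y$ to be the barred letter of $\b_0$, i.e.\ $\b=\b_0$. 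So the ``bookkeeping with two overlapping copies of $q$'' you anticipate is unnecessary --- but that step is not in the proof you wrote, so the argument as it stands is incomplete.
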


\begin{proof}
Suppose, to the contrary, that $\av(\v) = \av(\ts{B})$ for some finite set $\ts{B}$ of barred patterns, and that there exist distinct $\b,\b' \in \bar{\v}$.  By Lemma~\ref{lem:x in av(x)}, we know that $\class{\b} \in \av(\b)$ and $\class{\b'} \in \av(\b')$.  Also, the definition of $\bar{\v}$ gives
\begin{equation}\label{eqn:b b'}
\class{\b},\class{\b'} \in \av(\v) = \av(\ts{B}).
\end{equation}

Suppose that $\v$ is a vincular pattern of $n$ letters.  Each barred pattern in $\ts{B}$ must contain at least $n$ unbarred letters, because $\av(\v) = \av(\ts{B})$ contains all permutations of fewer than $n$ letters.  The barred patterns $\b$ and $\b'$ each have $n$ unbarred letters and a single unbarred letter, meaning that they must be elements of $\ts{B}$ itself in order to satisfy equation~\eqref{eqn:b b'}.  However, because
$$\class{\b} \not\in \av(\b') \supseteq \big(\av(\b)\cap\av(\b')\big) \supseteq \av(\ts{B}) = \av(\v),$$
this contradicts equation~\eqref{eqn:b b'}.

If $\bar{\v} = 0$, then $\av(\v) = \av(\class{\v})$.  But, by Lemma~\ref{lem:barred not equiv to classical}, such a $\v$ would not be coincidental.

Thus $\bar{\v} = 1$.
\end{proof}

Suppose that $\v$ is a coincidental vincular pattern.  Then, by Proposition~\ref{prop:co vinc has |B|=1}, we have $\bar{\v} = \{\b\}$, and $\av(\v) = \av(\b)$.  This $\b$ has the particular form dictated by the definition of $\bar{\v}$, and it will be clear from Definition~\ref{defn:bondable} that this $\b$ is then a naturally coincidental vincular pattern.  Thus the problem of determining exactly which vincular patterns are coincidental is a special case of the problem of determining exactly which barred patterns are nat-co, which will be treated in Theorem~\ref{thm:main bv}.

\section{Preparation for (naturally) coincidental barred patterns}\label{section:prep bv}

We now define what it means for a barred pattern to be nat-co.

\begin{defn}\label{defn:bond(b)}
For a barred pattern $\b$, let $\bond{\b}$ be the set of vincular patterns obtained by using bonds to replace nonempty subsets of the barred letters in $\b$.
\end{defn}

Some of the vincular patterns obtained by the process described in Definition~\ref{defn:bond(b)} could be equivalent, as in the second example below.

\begin{ex}\label{ex:degeneracy}\
\begin{itemize}
\item $\bond{4\ol{13}5\ol{2}} = \{\ub{32}41,\ 3\ub{14}2,\ 312\ub{4\pt},\ \ub{23}1,\ 2\ub{13\pt},\ \ub{21}\ub{3\pt},\ \ub{12\pt}\}$.
\item $\bond{5\ol{1234}6} = \{ \ub{41}235,\ 4\ub{12}35,\ 41\ub{23}5,\ 412\ub{35},\ \ub{31}24,\ \ub{312}4,\ \ub{31}\ub{24},\ 3\ub{12}4,\ 3\ub{124},\ 31\ub{24},$ $\ub{21}3,\ \ub{213},\ 2\ub{13},\ \ub{12}\}$.
\end{itemize}
\end{ex}

The elements of $\bond{\b}$ are those vincular patterns whose bonds reflect the barred nature of $\b$.  As such, these will be the only vincular patterns we allow in our attempt to have vincular patterns mimic the behavior of a barred pattern.

The next result follows immediately from the definition of the set $\bond{\b}$, and is a complement to Corollary~\ref{cor:bar is avoided}.

\begin{cor}\label{cor:bond isn't avoided}
For any barred pattern $\b$,
$$\{\class{\v} : \v \in \bond{\b}\} \cap \av(\b) = \emptyset.$$
\end{cor}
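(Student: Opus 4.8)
The plan is to verify, for each individual vincular pattern $\v\in\bond{\b}$, that the classical permutation $\class{\v}$ fails to avoid $\b$; the corollary is then just the union of these statements over all $\v\in\bond{\b}$. So the real work is a single containment claim about one $\v$ at a time, using only Definitions~\ref{defn:barred} and~\ref{defn:bond(b)}.

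First I would spell out the relationship between $\class{\v}$ and $\class{\b}$ that is implicit in Definition~\ref{defn:bond(b)}. A pattern $\v\in\bond{\b}$ arises by choosing a nonempty set $S$ of barred letters of $\b$, deleting them, and bonding the letters that flanked each deleted letter (or the appropriate endpoint). Hence $\class{\v}$ is order isomorphic to the subword of $\class{\b}$ obtained by erasing exactly the positions in $S$. Two consequences matter: (i) since $S$ is nonempty, $|\class{\v}| < |\class{\b}|$, so $\class{\v}$ has no room to contain an occurrence of the classical pattern $\class{\b}$ at all; and (ii) every unbarred letter of $\b$ survives the deletion (because $S$ consists only of barred letters), so $\class{\v}$ contains an occurrence of the unbarred portion of $\b$, namely the image of that unbarred portion under the erasure.

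Combining (i) and (ii) with Definition~\ref{defn:barred} finishes it: take the occurrence of the unbarred portion of $\b$ in $\class{\v}$ provided by (ii). By (i) it cannot be extended to a full $\class{\b}$-pattern inside $\class{\v}$, since $\class{\v}$ contains no $\class{\b}$-pattern whatsoever. Therefore $\class{\v}$ contains $\b$, i.e. $\class{\v}\notin\av(\b)$. As $\v$ was an arbitrary element of $\bond{\b}$, no $\class{\v}$ lies in $\av(\b)$, which is exactly the asserted $\{\class{\v}:\v\in\bond{\b}\}\cap\av(\b)=\emptyset$. This is precisely dual to Corollary~\ref{cor:bar is avoided}, where passing from $\v$ to an element of $\bar{\v}$ lengthened rather than shortened the underlying pattern. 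I do not expect an obstacle here: the whole point — and the reason the statement is ``immediate'' — is the length drop in (i), and the only care needed is to note that $\bond{\b}$ always deletes at least one letter (the word ``nonempty'' in Definition~\ref{defn:bond(b)}) and never deletes an unbarred one.
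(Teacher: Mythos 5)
Your proof is correct and matches the paper's (implicit) argument: the paper declares the corollary immediate from the definition of $\bond{\b}$, and the content of that immediacy is exactly your two observations --- the unbarred portion of $\b$ survives into $\class{\v}$, while $\class{\v}$ is strictly shorter than $\class{\b}$ and so cannot contain any full $\class{\b}$-pattern. Nothing further is needed.
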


\begin{defn}\label{defn:bondable}
A barred pattern $\b$ is \emph{naturally coincidental}, or \emph{nat-co}, if $\av(\b) = \av(\bond{\b})$.
\end{defn}

To show that $1\ol{2}3$ is not nat-co, it would suffice to prove that $\av(1\ol{2}3) \neq \av(\ub{12})$.  Thus Lemma~\ref{lem:not bondable} actually proves a stronger result about the barred pattern $1\ol{2}3$.

In order to characterize nat-co barred patterns, we will need to consider their maximal factors (that is, consecutive subsequences) of barred letters, together with the letters (if any) that appear immediately to the left and to the right of these factors.  To this end, we define ``boycotts,'' so-named to refer to the collective barring of a set of values.

\begin{defn}\label{defn:boycott}
Let $\b$ be a barred pattern.  Suppose that $\ol{\b_i\b_{i+1}\cdots \b_j}$ is a barred factor in $\b$, of maximal length.  Then $X = \{\b_{i-1},\b_i,\ldots,\b_j,\b_{j+1}\}$ is a \emph{boycott} of $\b$.  Let $U(X) = \{\b_{i-1},\b_{j+1}\}$ be the set of unbarred letters in $X$, and $B(X) = \{\b_i,\ldots,\b_j\}$ be the set of barred letters in $X$.
\end{defn}

Note that either element of the set $U(X)$ may be undefined, although they are not both undefined because we have assumed that all barred patterns are proper barred patterns.

\begin{ex}\label{ex:boycott}
The barred pattern $\ol{92}43\ol{71}5\ol{6}8$ has three boycotts.  From left to right, they are $\{2,4,9\}$, $\{1,3,5,7\}$, and $\{5,6,8\}$.  Moreover, $U(\{2,4,9\}) = \{4\}$ and $B(\{2,4,9\}) = \{2,9\}$.
\end{ex}

The definition of a boycott gives the following easy result.

\begin{lem}\label{lem:intersecting boycotts}
Any distinct boycotts in a barred pattern share at most one letter, and that letter is unbarred.
\end{lem}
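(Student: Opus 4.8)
The plan is to translate the claim into a statement about \emph{positions} rather than values. Fix a barred pattern $\b = \b_1\b_2\cdots\b_n$. Since a permutation has distinct entries, each letter $\b_k$ is pinned down by its position $k$, so a boycott $X = \{\b_{i-1},\b_i,\ldots,\b_j,\b_{j+1}\}$ arising, via Definition~\ref{defn:boycott}, from a maximal barred factor $\ol{\b_i\cdots\b_j}$ may be regarded as the interval of positions $\{i-1,i,\ldots,j+1\}$ (delete $i-1$ if $i=1$, delete $j+1$ if $j=n$). A boycott is determined by the factor that generated it, so distinct boycotts come from distinct maximal barred factors.

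The key structural observation I would record first is a \emph{separation property}: distinct maximal barred factors are never consecutive. Two distinct maximal barred factors occupy disjoint intervals of positions — otherwise their union would be a single longer barred factor, contradicting maximality — so, relabelling if necessary, say the factors are $\ol{\b_i\cdots\b_j}$ and $\ol{\b_{i'}\cdots\b_{j'}}$ with $j<i'$. Maximality of the first factor forces $\b_{j+1}$ to be unbarred, whereas every letter $\b_{i'},\ldots,\b_{j'}$ of the second factor is barred; since $j<i'\le j'$ yields $j+1\le j'$, having $j+1\ge i'$ would place the unbarred letter $\b_{j+1}$ inside the second barred factor, which is impossible. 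Hence $j+1<i'$, i.e.\ $j+1\le i'-1$.

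With the separation property in hand, the lemma reduces to intersecting two intervals. Let $X,X'$ be distinct boycotts, coming from maximal barred factors at positions $[i,j]$ and $[i',j']$ with $j<i'$. As sets of positions, $X\subseteq\{i-1,\ldots,j+1\}$ and $X'\subseteq\{i'-1,\ldots,j'+1\}$; the largest position in the first interval is $j+1$ and the smallest in the second is $i'-1\ge j+1$, so the two intervals share at most the single position $j+1$, and they do so only when $i'=j+2$. In that case the common letter is $\b_{j+1}=\b_{i'-1}$, which is unbarred, once more by maximality of $\ol{\b_i\cdots\b_j}$. (When $i=1$ or $j'=n$ the boycott in question is merely a shorter interval, and the same computation goes through verbatim.) The whole argument is bookkeeping; the only step where the definitions genuinely bite — and hence the one I would be most careful to state correctly — is the separation property, since it is there that maximality of the barred factors is used.
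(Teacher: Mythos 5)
Your proof is correct and supplies exactly the routine argument the paper leaves implicit: the paper states this lemma without proof, remarking only that it follows from the definition of a boycott. Your separation property (distinct maximal barred factors must be separated by at least one unbarred position) is the right place to locate the one nontrivial use of maximality, and the interval bookkeeping that follows is sound, including the boundary cases $i=1$ and $j'=n$.
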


\section{Main results}\label{section:main}

This paper gives a complete description of all naturally coincidental barred patterns, and all coincidental vincular patterns.  These results are stated as Theorem~\ref{thm:main bv}, and Corollary~\ref{cor:main vb}, respectively.

\begin{thm}\label{thm:main bv}
A barred pattern of $n$ letters is naturally coincidental if and only if it has a unique boycott $X$ and satisfies
\begin{itemize}
\item $|B(X)| \le 2$,
\item for all distinct $x,x' \in X$ with $\{x,x'\} \neq U(X)$, we have $|x-x'| > 1$, and
\item for all $0 \le k \le n$, we have $\{k,k+1\}\cap X \neq \emptyset$.
\end{itemize}
\end{thm}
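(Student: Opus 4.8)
Since the statement is an equivalence, I would prove the two implications separately, treating the forward direction (nat-co $\Rightarrow$ the three bulleted conditions) by contraposition. The common preliminary is to turn Definitions~\ref{defn:barred} and~\ref{defn:vincular} into a workable local criterion. When $\b$ has a unique boycott $X$, all of its barred letters lie in one maximal barred factor, so $\b$ contains the factor $u_\ell\,\ol{b_1\cdots b_m}\,u_r$ with $U(X)=\{u_\ell,u_r\}$ (possibly missing one of $u_\ell,u_r$, which is then an endpoint) and $m=|B(X)|$. An occurrence of the unbarred portion of $\b$ in a permutation $w$ fails to sit inside a $\class{\b}$-pattern of $w$ exactly when the ``slot'' between the images of $u_\ell$ and $u_r$ admits no letters realizing $b_1,\dots,b_m$ in the correct relative order and with the correct values; meanwhile every $\v\in\bond{\b}$ arises by deleting a nonempty $S\subseteq B(X)$ and recording the created adjacency as a bond, so an occurrence of $\v$ in $w$ is nothing but an occurrence of the unbarred portion of $\b$ in which that slot has already collapsed (a surviving $b_i$ may still be present if $S\subsetneq B(X)$, which forces $m=2$). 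Corollaries~\ref{cor:bond isn't avoided} and~\ref{cor:bar is avoided} confirm these are the right objects to compare, and it helps to record the candidate witness $\class{\b}$: by Lemma~\ref{lem:x in av(x)} it always lies in $\av(\b)$, so $\b$ is not nat-co as soon as $\class{\b}\notin\av(\bond{\b})$.

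For the reverse implication (the three conditions $\Rightarrow$ nat-co) I would establish the inclusions $\av(\bond{\b})\subseteq\av(\b)$ and $\av(\b)\subseteq\av(\bond{\b})$ in turn. For the first: if $w$ contains some $\v\in\bond{\b}$, the bonds of $\v$ make the images of $u_\ell$ and $u_r$ (or of $u_\ell$ and a surviving $b_i$) consecutive in $w$, leaving no room to insert the deleted barred letter(s); the value-separation and covering hypotheses are precisely what then forbid relocating those letters elsewhere or re-anchoring on a different extension of the same unbarred positions, so $w$ contains $\b$. For the second: if $w$ contains $\b$, choose a witnessing occurrence $O$ of the unbarred portion whose slot (the positions strictly between the images of $u_\ell$ and $u_r$) is as short as possible. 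The covering hypothesis ($\{k,k+1\}\cap X\neq\emptyset$ for all $k$) forces the boycott values to be almost-alternating — consecutive elements of $X$ differ by $2$, with the single permitted ``defect'' occurring at $U(X)$ by the separation hypothesis — which pins the barred values to unique relative positions; then any letter sitting in the slot could be promoted to play some $b_i$ and extend $O$, contradicting minimality unless the slot is empty ($m=1$) or has collapsed to exactly the configuration recorded by one of the (at most three) members of $\bond{\b}$ ($m=2$). Hence $O$ is itself an occurrence of a member of $\bond{\b}$, so $w\notin\av(\bond{\b})$.

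For the contrapositive I would dispose of the four ways a condition can fail, each time exhibiting a separating permutation — in all the small examples $\class{\b}$ itself works. If $\b$ has two boycotts (by Lemma~\ref{lem:intersecting boycotts} they overlap in at most an unbarred letter), I expect $\class{\b}$ to already contain some member of $\bond{\b}$, so $\class{\b}\in\av(\b)\setminus\av(\bond{\b})$; the model is $3241\in\av(3\ol 24\ol 1)\setminus\av(\bond{3\ol 24\ol 1})$. If the unique boycott has $|B(X)|\geq 3$, then three or more barred letters can never be inserted into a slot at once, while a sufficiently pattern-poor permutation avoids every member of $\bond{\b}$, so $\av(\bond{\b})\setminus\av(\b)\neq\emptyset$, as with $321\in\av(\bond{\ol{123}4})\setminus\av(\ol{123}4)$. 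If some boycott carries a value-adjacent pair other than $U(X)$, or some $\{k,k+1\}$ misses $X$, then a value adjacent to a barred value is left ``unclaimed'' by the surrounding letters of $\class{\b}$, which is exactly what lets a member of $\bond{\b}$ — whose underlying classical pattern is $\class{\b}$ with one barred letter deleted — occur inside $\class{\b}$ through that unclaimed value, so again $\class{\b}\in\av(\b)\setminus\av(\bond{\b})$, as with $123\in\av(1\ol 23)\setminus\av(\ub{12})$.

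The hard part should be the second inclusion of the reverse implication when $m=|B(X)|=2$: one must rule out \emph{every} alternative $\class{\b}$-pattern supported on a given occurrence of the unbarred portion, and turning ``$X$ covers all consecutive value pairs and carries no spurious value-adjacency'' into ``the two barred values have no slack'' requires a careful case analysis over the relative order of $b_1,b_2$ inside the factor and over their values relative to the rest of $\b$. The degeneracy of Example~\ref{ex:degeneracy} (distinct deletions yielding the same vincular pattern, or bonds to an endpoint) only ever shrinks $\bond{\b}$, so it adds bookkeeping but no essential difficulty.
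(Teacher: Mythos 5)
Your sufficiency argument and your treatment of the value-separation condition track the paper: the paper also disposes of sufficiency by the minimal-slot device of Lemma~\ref{lem:bondable ex}, and when two letters of $X$ other than $U(X)$ differ by $1$ the witness really is $\class{\b}$ (Propositions~\ref{prop:distance in B(X)} and~\ref{prop:distance between U and B}). The genuine gap is that you propose $\class{\b}$ as the separating permutation for essentially \emph{every} failure mode, i.e.\ you always try to show $\class{\b}\in\av(\b)\setminus\av(\bond{\b})$. For the other three failure modes the separation goes in the opposite direction: one must construct $w\in\av(\bond{\b})\setminus\av(\b)$, and $\class{\b}$ never does that. Concretely, $\b=\ol{2}4\ol{1}3$ has two boycotts, yet $\class{\b}=2413$ avoids every element of $\bond{\b}=\{\ub{\pt3}12,\ 1\ub{32},\ \ub{\pt21}\}$, so your witness fails; a correct separator is $231$, which avoids $\bond{\b}$ but contains $\b$ (the inversion $31$ has no position between its two letters). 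Likewise for $\b=1\ol{5}234$, where only the covering condition fails ($\{3,4\}\cap X=\emptyset$): here $\bond{\b}=\{\ub{12}34\}$ and $\class{\b}=15234$ avoids $\ub{12}34$, so again nothing is separated; the permutation $1\,(3.5)\,2\,3\,4\approx 14235$ does the job. Your illustrative example $123\in\av(1\ol{2}3)\setminus\av(\ub{12})$ is actually an instance of the value-separation condition failing ($|1-2|=1$), not the covering condition, which $1\ol{2}3$ satisfies.

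The paper's necessity proof is built around exactly these opposite-direction witnesses, obtained by inserting carefully chosen values into the collapsed slot of the unbarred portion: a new smallest (or largest) value when some boycott omits $1$ (or $n$), which combined with Lemma~\ref{lem:intersecting boycotts} yields uniqueness of the boycott (Proposition~\ref{ref:boycott contents}); the single value $k+\tfrac12$ when $\{k,k+1\}$ misses $X$ (Proposition~\ref{prop:max distance between U and B}); and the pair $\b_{i+1},\,\b_{i+1}+\tfrac12$ when $|B(X)|\ge 3$ (Proposition~\ref{prop:boycott size}). In each case the inserted letters block every completion of the unbarred portion to a $\class{\b}$-pattern (so $\b$ is contained) while realizing no bond of any $\v\in\bond{\b}$ (so $\bond{\b}$ is avoided). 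Your phrase ``a sufficiently pattern-poor permutation avoids every member of $\bond{\b}$'' gestures at this but is not a construction, and your $|B(X)|\ge 3$ example $\ol{123}4$ also violates value-separation, so it does not isolate the condition being tested. Without these constructions the forward implication is unproved for three of the four failure modes.
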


Note that the last bullet point of Theorem~\ref{thm:main bv} means $\{1,n\} \in X$, and all three requirements force $3 \le n \le 7$.

The proof of this result will follow from a sequence of smaller results, and is presented in Section~\ref{section:proof}.

\begin{ex}\label{ex:coincidental}
It is easy to check that $1\ol{4}23$, $\ol{1}\ol{5}324$, and $63\ol{1}\ol{7}524$ each satisfy the hypotheses of Theorem~\ref{thm:main bv}.  The conclusion for each case is given below.
\begin{itemize}
\item $\av(1\ol{4}23) = \av(\ub{12}3)$.
\item $\av(\ol{1}\ol{5}324) = \av(\ub{\pt2}13,\ \ub{13}24,\ \ub{\pt4}213)$.
\item $\av(63\ol{1}\ol{7}524) = \av(5\ub{24}13,\ 63\ub{15}24,\ 5\ub{26}413)$.
\end{itemize}
\end{ex}

It is interesting to note that the non-boycott portion of a nat-co barred pattern is unspecified, once the hypotheses of Theorem~\ref{thm:main bv} are met.  For example, $\ol{15}324$ and $\ol{15}342$ are both nat-co barred patterns.

The restrictive nature of Theorem~\ref{thm:main bv} yields several easy corollaries.

\begin{cor}\label{cor:nat-co vinc precise}
A barred pattern of $n$ letters is nat-co if and only if it as a unique boycott $X$ and satisfies
\begin{itemize}
\item $n = 7$ and $X = \{1,3,5,7\}$, with $|B(X)| = 2$,
\item $n = 6$ and $(U(X), B(X))$ is one of the following pairs:
$$\big(\{1,2\}, \{4,6\}\big),\ \big(\{3,4\},\{1,6\}\big), \text{\ or } \big(\{5,6\},\{1,3\}\big),$$
\item $n = 5$ and $X = \{1,3,5\}$, with $|B(X)| \in \{1,2\}$,
\item $n = 4$ and $(U(X), B(X))$ is one of the following pairs:
$$\big(\{1,2\},\{4\}\big) \text{\ or } \big(\{3,4\}, \{1\}\big),$$
or
\item $n = 3$ and $X = \{1,3\}$, with $|B(X)|$ necessarily equal to $1$.
\end{itemize}
\end{cor}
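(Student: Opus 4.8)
The plan is to derive the corollary from Theorem~\ref{thm:main bv} by a finite case analysis on the boycott. Throughout, $\b$ is a barred pattern of $n$ letters with a unique boycott $X$, and I write $U = U(X)$ and $B = B(X)$, so $X = U \sqcup B$. A boycott contains a nonempty factor of barred letters, so $|B| \ge 1$, while properness forces $1 \le |U| \le 2$ (as noted after Definition~\ref{defn:boycott}); hence $2 \le |X| \le 4$, with $|X| = 4$ only when $|U| = |B| = 2$. I would first restate the three bullets of Theorem~\ref{thm:main bv} as constraints on $X \subseteq [n]$: (i) $|B| \le 2$; (ii) among pairs of distinct elements of $X$, only the pair $U$ (necessarily when $|U| = 2$) may consist of two consecutive integers, so $X$ has at most one such pair and, if it has one, that pair is $U$; (iii) the instances $k = 0$ and $k = n$ force $1, n \in X$, and the instances $1 \le k \le n-1$ say that no two consecutive integers of $[n]$ lie outside $X$, which — writing $X = \{1 = x_1 < \cdots < x_m = n\}$ with $m = |X|$ — is the same as $x_{t+1} - x_t \le 2$ for every $t$.

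From (iii) and $m \le 4$ one gets $n = x_m \le 1 + 2(m-1) \le 7$; and $n \ge 3$, since for $n \le 2$ the unique pair of distinct elements of $X$ is consecutive and, having $|U| = 1$, is not $U$, violating (ii). I would then split on $m = |X| \in \{2,3,4\}$, using (ii) to discard any $X$ with two or more consecutive pairs and using (iii) (the gap bound, together with $1, n \in X$) to pin down $n$. For $m = 2$: $X = \{1,n\}$ with $n - 1 \le 2$, so $n = 3$, $X = \{1,3\}$, and $|B| = |X| - |U| = 1$. For $m = 3$: the gap bound gives $n \le 5$, and $n = 3$ is excluded ($\{1,2,3\}$ has two consecutive pairs), leaving $n \in \{4,5\}$; for $n = 5$, both gaps equal $2$ so $X = \{1,3,5\}$, which has no consecutive pair, so (ii) is vacuous and $|B| = 3 - |U| \in \{1,2\}$; for $n = 4$, $X \in \{\{1,2,4\},\{1,3,4\}\}$, each with a single consecutive pair which by (ii) must be $U$, giving $(U,B) = (\{1,2\},\{4\})$ or $(\{3,4\},\{1\})$. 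For $m = 4$: then $|U| = |B| = 2$, the gap bound gives $n \le 7$, and $n \in \{4,5\}$ are excluded (the relevant subsets of $[4]$ and $[5]$ all have $\ge 2$ consecutive pairs), leaving $n \in \{6,7\}$; for $n = 7$, all gaps equal $2$ so $X = \{1,3,5,7\}$ (no consecutive pair, $|B| = 2$); for $n = 6$, $X \in \{\{1,2,4,6\},\{1,3,4,6\},\{1,3,5,6\}\}$, each with a single consecutive pair which by (ii) must be $U$, giving the three listed $(U,B)$-pairs.

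Conversely, each configuration in the statement visibly satisfies (i)--(iii): $|B| \le 2$ by inspection; $1, n \in X$ with all gaps $\le 2$ by inspection; and in each case $X$ has at most one consecutive pair, which is $U$ whenever present. By Theorem~\ref{thm:main bv} these are therefore exactly the nat-co barred patterns, and since ``has a unique boycott $X$'' appears verbatim in both the theorem and the corollary, nothing further is needed.

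I expect the only real difficulty to be organizational: keeping the enumeration visibly exhaustive without double-counting. One must simultaneously track $|X|$, the forced membership of $1$ and $n$, the ``no two consecutive integers outside $X$'' condition, and the rule that a consecutive pair of $X$ can only be $U$; and one must observe that the single set $\{1,3,5\}$ (and likewise $\{1,3,5,7\}$) admits several legal $(U,B)$-splits, which the corollary correctly compresses into a single clause.
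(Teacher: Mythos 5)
Your proposal is correct and follows the same route the paper intends: the paper states this corollary without proof as an immediate consequence of Theorem~\ref{thm:main bv} (``the restrictive nature of Theorem~\ref{thm:main bv} yields several easy corollaries''), and your finite case analysis on $|X|\in\{2,3,4\}$, using $1,n\in X$, the gap bound $x_{t+1}-x_t\le 2$, and the rule that the only permitted consecutive pair is $U(X)$, is exactly the verification being left to the reader. Your enumeration is exhaustive and matches the listed configurations in every case.
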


Note that Corollary~\ref{cor:nat-co vinc precise} points out that for a nat-co barred pattern $\b$, there will be no degeneracy in the set $\bond{\b}$ as there had been in the second part of Example~\ref{ex:degeneracy}.

\begin{cor}\label{cor:size of vinc}
For any nat-co barred pattern $\b$, the size of the set $\bond{\b}$ is either $1$ or $3$.  More precisely, if $\b$ has a single barred symbol then $|\bond{\b}| = 1$, and if $\b$ has two barred symbols then $|\bond{\b}| = 3$.
\end{cor}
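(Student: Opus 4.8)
The plan is to read $|\bond{\b}|$ directly off the combinatorial shape of a nat-co barred pattern, as pinned down by Theorem~\ref{thm:main bv}. First I would note that such a $\b$ has a unique boycott $X$; since $X$ is the only boycott and every barred letter of $\b$ lies in the $B$-set of some boycott, the barred letters of $\b$ are exactly $B(X)$, and a boycott arises from a nonempty barred factor, so $1 \le |B(X)| \le 2$. By Definition~\ref{defn:bond(b)} the elements of $\bond{\b}$ are indexed by the nonempty subsets of $B(X)$, whence $|\bond{\b}| \le 2^{|B(X)|}-1$, which is $1$ if $\b$ has a single barred symbol and $3$ if it has two. When $|B(X)| = 1$ there is only one such subset, so $|\bond{\b}| = 1$ and that case is done. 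Everything therefore reduces to showing that when $B(X) = \{b_1,b_2\}$ the three vincular patterns coming from $\{b_1\}$, $\{b_2\}$, and $\{b_1,b_2\}$ are pairwise distinct, i.e.\ that there is no degeneracy of the sort seen in the second part of Example~\ref{ex:degeneracy}.

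For this, the key observation is that, $X$ being the unique boycott, $b_1$ and $b_2$ form the single maximal barred factor $\ol{\b_i\b_{i+1}}$ of $\b$, so they occupy adjacent positions $i$ and $i+1$ of its one-line notation; replacing a subset $S \subseteq \{b_1,b_2\}$ by bonds amounts to deleting the letters of $S$, restandardizing, and inserting a bond across each gap thereby created (a bond to the endpoint $\ast$ when the deleted letter was leftmost or rightmost). Two quick consequences finish the argument. First, deleting both $b_1$ and $b_2$ yields a vincular pattern on $n-2$ letters, while deleting just one yields a pattern on $n-1$ letters, so the ``delete both'' pattern differs from the other two. Second, deleting exactly one of $b_1,b_2$ creates exactly one bond, and its location distinguishes the two choices: deleting $\b_i$ leaves a bond straddling the old positions $i-1$ and $i+1$, which are positions $i-1$ and $i$ of the resulting length-$(n-1)$ word, whereas deleting $\b_{i+1}$ leaves a bond between positions $i$ and $i+1$; since these two bonded position-pairs are different and each of the two patterns carries only this one bond, they cannot coincide. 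Hence $|\bond{\b}| = 3$ when $\b$ has two barred symbols.

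The one place that needs care --- and the step I expect to be the only real obstacle --- is the boundary case in which $b_1$ sits in the first position of $\b$ or $b_2$ in the last, so the relevant bond attaches to an endpoint $\ast$; here one must check that the bonded position-pair from deleting $\b_i$ still lies strictly to the left of the one from deleting $\b_{i+1}$, reading the left endpoint as position $0$ and the right endpoint as the slot just past the last letter. This is routine, and as a cross-check I would appeal to Corollary~\ref{cor:nat-co vinc precise}: for each of the finitely many shapes listed there with $|B(X)| = 2$, one simply exhibits the three distinct members of $\bond{\b}$ (exactly as is done implicitly in Example~\ref{ex:coincidental}). Either route yields $|\bond{\b}| = 1$ when $\b$ has one barred letter and $|\bond{\b}| = 3$ when it has two, which is the claim.
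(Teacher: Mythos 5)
Your proof is correct and follows essentially the same route as the paper, which treats this as an immediate consequence of Theorem~\ref{thm:main bv}: the barred letters form the single boycott's $B(X)$ with $|B(X)|\le 2$, the elements of $\bond{\b}$ correspond to nonempty subsets of $B(X)$, and (as the paper only remarks without proof) no degeneracy of the kind in Example~\ref{ex:degeneracy} can occur. Your bond-position argument distinguishing the two single-deletion patterns, including the endpoint convention, correctly supplies the detail the paper leaves implicit.
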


\begin{cor}\label{cor:total}
There are $720$ nat-co barred patterns.
\end{cor}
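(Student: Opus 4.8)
The plan is to count directly from Corollary~\ref{cor:nat-co vinc precise}. That corollary identifies a nat-co barred pattern $\b$ of $n$ letters with the following data: its unique boycott $X$ together with the ordered pair $(U(X),B(X))$, which is one of an explicit finite list depending on $n$; the block of $|X|$ consecutive positions occupied by the values of $X$ in $\b$; the arrangement of those values within that block, in which $B(X)$ occupies a contiguous run flanked by the letter(s) of $U(X)$; and the arrangement of the remaining $n-|X|$ values in the remaining positions. I would first verify that this correspondence is a bijection. In one direction the data are read off $\b$: the unique boycott determines $X$, $B(X)$, $U(X)$, and the block of positions, and the two arrangements are just $\b$ restricted to those position sets. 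In the other direction, any choice of the data produces a permutation with bars precisely on the values of $B(X)$, which sit in consecutive positions, so $\b$ has a single maximal barred factor and hence a single boycott, and that boycott is exactly $X$ with the prescribed partition; since the hypotheses of Corollary~\ref{cor:nat-co vinc precise} involve only $(n,X,B(X))$, this $\b$ is nat-co. Distinct data clearly give distinct patterns. The one point needing care is that the remaining $n-|X|$ letters may indeed be placed \emph{arbitrarily}: this is fine because those letters carry no bars, so no second maximal barred factor, and hence no second boycott, can be created.

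With the bijection in hand, the count is bookkeeping. Set $m=|X|=|B(X)|+|U(X)|$. The number of choices for the block of positions is $n-m+1$ when $|U(X)|=2$ (the barred factor is interior and the block may start anywhere) and exactly $2$ when $|U(X)|=1$ (the barred factor must be pinned to the left or right end of $\b$). Inside the block there are $|U(X)|!\cdot|B(X)|!$ arrangements, and outside there are $(n-m)!$ arrangements; finally one multiplies by the number of admissible pairs $(U(X),B(X))$, which is $2$ for $n\in\{3,4\}$, is $3$ for $|B(X)|=1$ and $3$ for $|B(X)|=2$ when $n=5$, is $3$ for $n=6$, and is $\binom{4}{2}=6$ for $n=7$ (choose the two barred values among $\{1,3,5,7\}$).

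Running through the five cases then gives $2\cdot 2\cdot 1\cdot 1=4$ patterns for $n=3$; $2\cdot 2\cdot 2\cdot 1=8$ for $n=4$; $3\cdot 3\cdot 2\cdot 2+3\cdot 2\cdot 2\cdot 2=36+24=60$ for $n=5$; $3\cdot 3\cdot 4\cdot 2=72$ for $n=6$; and $6\cdot 4\cdot 4\cdot 6=576$ for $n=7$. Adding, $4+8+60+72+576=720$, as claimed. The only real obstacle is the bijection step --- specifically being sure that ``the non-boycott portion is unspecified,'' as in the remark following Example~\ref{ex:coincidental}, is exactly right, i.e.\ that every filling of the non-boycott positions is legal and that no such filling merges with the boycott or spawns a new one --- after which the enumeration is routine.
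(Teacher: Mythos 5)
Your count is correct and is essentially the paper's own (implicit) argument: the paper gives no proof beyond Table~\ref{table:bondable perms}, and your five case computations reproduce exactly its entries $4,8,36+24,72,576$, summing to $720$. The bijection you set up (boycott data, block placement, internal and external arrangements) is the right justification for those entries, and your care about the non-boycott letters being unconstrained matches the paper's remark following Example~\ref{ex:coincidental}.
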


Corollary~\ref{cor:total} is refined in Table~\ref{table:bondable perms}.

\begin{table}[htbp]
\begin{tabular}{r||c|c|c|c|c}
& ${n=}$ 3 & 4 & 5 & 6 & 7\\
\hline
\hline
${b=}$ 1 & 4 & 8 & 36 & 0 & 0\\
\hline
2 & 0 & 0 & 24 & 72 & 576
\end{tabular}
\caption{The number of nat-co barred patterns having $n$ letters, of which $b$ are barred.  There are $0$ nat-co barred patterns for each $(n,b)$ not listed in the table.}\label{table:bondable perms}
\end{table}

An example of a nat-co barred permutation for each of the nonzero entries in Table~\ref{table:bondable perms} is given in Table~\ref{table:bondable perm ex}.

\begin{table}[htbp]
$$\begin{array}{r||c|c|c|c|c}
& {n=} 3 & 4 & 5 & 6 & 7\\
\hline
\hline & & & & & \\ [-2ex]
{b=} 1 & \ol{3}12 & 1\ol{4}23 & 1\ol{5}324 & - & -\\
\hline & & & & & \\ [-2ex]
2 & - & - & \ol{3}\ol{5}124 & 1\ol{4}\ol{6}235 & 1\ol{5}\ol{7}3246
\end{array}$$
\caption{Examples of nat-co barred patterns having $n$ letters, of which $b$ are barred.}\label{table:bondable perm ex}
\end{table}

The complementary result to Theorem~\ref{thm:main bv}, the characterization of coincidental vincular patterns, is now very easy to state.  It follows from Proposition~\ref{prop:co vinc has |B|=1}, and Definitions~\ref{defn:bar(v)} and~\ref{defn:bond(b)}, that a vincular pattern $\v$ is coincidental if and only if $\bar{\v} = \{\b\}$ for some nat-co barred pattern $\b$.

\begin{cor}\label{cor:main vb}
There are $48$ coincidental vincular patterns, all of which were yielded by Theorem~\ref{thm:main bv}.
\end{cor}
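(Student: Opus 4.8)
The plan is to count coincidental vincular patterns by leveraging the already-established structural bridge: by Proposition~\ref{prop:co vinc has |B|=1}, a vincular pattern $\v$ is coincidental precisely when $\bar{\v}$ is a singleton, and (as noted just before the statement) that single barred pattern $\b$ must be nat-co, so that $\av(\v) = \av(\b)$. Conversely, every nat-co barred pattern $\b$ arises as $\bar{\v} = \{\b\}$ for some vincular $\v$, since $\bond{\b}$ is nonempty and each of its members is coincident with $\b$. So the task reduces to a careful bijective bookkeeping between nat-co barred patterns and coincidental vincular patterns, being mindful that this correspondence is \emph{not} one-to-one: a single nat-co $\b$ can be coincident with several vincular patterns (exactly the members of $\bond{\b}$, by Definition~\ref{defn:bondable}).

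First I would invoke Corollary~\ref{cor:size of vinc}: each nat-co $\b$ with one barred symbol has $|\bond{\b}| = 1$, and each with two barred symbols has $|\bond{\b}| = 3$. I then need to check that these $\bond{\b}$-sets, as $\b$ ranges over all $720$ nat-co barred patterns, are pairwise disjoint and that every coincidental vincular pattern lies in exactly one of them. Disjointness follows because if $\v \in \bond{\b} \cap \bond{\b'}$, then $\av(\b) = \av(\v) = \av(\b')$, and then $\class{\b} \in \av(\b) = \av(\b')$ forces $\b = \b'$ by an argument identical to the one in the proof of Lemma~\ref{lem:barred not equiv to classical} and Proposition~\ref{prop:co vinc has |B|=1} (two barred patterns with the minimal number of unbarred letters and a common avoidance class must coincide). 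Coverage is immediate from Proposition~\ref{prop:co vinc has |B|=1} together with the observation that when $\bar{\v} = \{\b\}$ we have $\v \in \bond{\b}$, which is exactly the content of Definitions~\ref{defn:bar(v)} and~\ref{defn:bond(b)} being mutually inverse operations on the barred/bonded symbols.

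With disjointness and coverage in hand, the count is a sum over Table~\ref{table:bondable perms}. The nat-co barred patterns with one barred symbol number $4 + 8 + 36 = 48$, each contributing exactly one coincidental vincular pattern; the nat-co barred patterns with two barred symbols number $24 + 72 + 576 = 672$, but I must be careful here — naively each contributes $3$, giving $3 \cdot 672 = 2016$, which would \emph{not} match $48$. So the actual mechanism must be different: the correct reading is that the $48$ coincidental vincular patterns are precisely those $\v$ with $|\bar{\v}| = 1$, and for a two-barred nat-co $\b$ the three members of $\bond{\b}$ need \emph{not} all have $\bar{\,\cdot\,}$ equal to $\{\b\}$ — only some do. Thus the real work is to verify, for each of the five structural families listed in Corollary~\ref{cor:nat-co vinc precise}, exactly how many of the vincular patterns coincident with members of that family actually satisfy the $|\bar{\v}| = 1$ criterion, and confirm the running total is $48$. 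I would do this family by family, using the explicit forms of $X$, $U(X)$, $B(X)$ and the genuinely small number of cases.

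The main obstacle I anticipate is precisely this reconciliation: understanding why the bijection is with the \emph{one-barred} nat-co patterns (count $48$) rather than with all $720$, i.e., showing that the coincidental vincular patterns are in bijection with a subset of nat-co barred patterns of size $48$, and pinning down which subset. I expect this comes down to: $\v$ is coincidental iff $\bar{\v}$ is a singleton, and $|\bar{\v}| = 1$ forces the replaced bond to sit in a position that yields a \emph{single} barred letter in the relevant boycott slot — so coincidental vincular patterns correspond exactly to nat-co barred patterns whose boycott has $|B(X)| = 1$ for the distinguished slot, which after unwinding Corollary~\ref{cor:nat-co vinc precise} gives $4 + 8 + 36 = 48$. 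Verifying that no coincidental vincular pattern can have its unique barred-pattern partner carry two bars (because then $\bar{\v}$ would have picked up extra members from the other bondable position) is the one place where I would slow down and argue carefully rather than wave at the table.
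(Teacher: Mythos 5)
You arrive at the right destination---the coincidental vincular patterns are in bijection with the one-barred nat-co barred patterns, giving $4+8+36=48$ from Table~\ref{table:bondable perms}---and this is essentially the paper's route via Proposition~\ref{prop:co vinc has |B|=1}. However, the step you flag as the place where you ``would slow down and argue carefully,'' namely verifying that no coincidental vincular pattern can have its unique barred-pattern partner carry two bars, is not a case analysis at all: it is immediate from Definition~\ref{defn:bar(v)}. Every element of $\bar{\v}$ is obtained by replacing a single bond of $\v$ by a \emph{single} barred symbol, so every element of $\bar{\v}$ has exactly one bar. Hence if $\v$ is coincidental, Proposition~\ref{prop:co vinc has |B|=1} forces $\bar{\v}=\{\b\}$ with $\b$ one-barred and $\av(\v)=\av(\b)$; then $\bond{\b}=\{\v\}$, and since $\av(\b)\subseteq\av(\v)$ holds automatically for the unique $\v\in\bond{\b}$ (an occurrence of $\v$ leaves no room to insert the barred letter, so it is an occurrence of $\b$), the pattern $\b$ is nat-co with one bar. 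Conversely, each one-barred nat-co $\b$ has $|\bond{\b}|=1$ by Corollary~\ref{cor:size of vinc} and its unique element is coincidental by definition; the assignment $\b\mapsto\v$ is injective because two distinct one-barred preimages of the same $\v$ would both lie in $\bar{\v}$, contradicting $|\bar{\v}|=1$. That pins the count to the $b=1$ row of the table with no family-by-family verification needed.

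A secondary issue: your opening paragraph asserts that ``every nat-co barred pattern $\b$ arises as $\bar{\v}=\{\b\}$ for some vincular $\v$'' because ``each of its members is coincident with $\b$.'' Both claims are false for two-barred $\b$: for $\b=63\ol{1}\ol{7}524$ no single $\v\in\bond{\b}$ satisfies $\av(\v)=\av(\b)$ (all three members of $\bond{\b}$ are needed), and $\{\b\}$ cannot equal $\bar{\v}$ for any $\v$ because, again, elements of $\bar{\v}$ carry exactly one bar. You do eventually retract the consequences of this when the naive count $48+3\cdot672$ fails to match, but the retraction should be replaced by the definitional observation above rather than deferred to a careful check.
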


The $48$ patterns of Corollary~\ref{cor:main vb} are listed in Table~\ref{table:co vinc patterns}.  Note that the first entry in each of the three columns of Table~\ref{table:co vinc patterns} has its corresponding coincidental barred pattern in the $b=1$ row of Table~\ref{table:bondable perm ex}.

\begin{table}[htbp]
$$\begin{array}{c|cc|ccc}
n=2 & n=3 & & n=4\\
\hline
\hline
\ub{\pt1}2 & \ub{12}3 & 3\ub{12} & \ub{13}24 & 2\ub{13}4 & 24\ub{13}\\
\ub{\pt2}1 & \ub{21}3 & 3\ub{21} & \ub{13}42 & 4\ub{13}2 & 42\ub{13}\\
1\ub{2\pt} & \ub{23}1 & 1\ub{23} & \ub{31}24 & 2\ub{31}4 & 24\ub{31}\\
2\ub{1\pt} & \ub{32}1 & 1\ub{32} & \ub{31}42 & 4\ub{31}2 & 42\ub{31}\\
 & & & \ub{14}23 & 2\ub{14}3 & 23\ub{14}\\
 & & & \ub{14}32 & 3\ub{14}2 & 32\ub{14}\\
 & & & \ub{41}23 & 2\ub{41}3 & 23\ub{41}\\
 & & & \ub{41}32 & 3\ub{41}2 & 32\ub{41}\\
 & & & \ub{24}13 & 1\ub{24}3 & 13\ub{24}\\
 & & & \ub{24}31 & 3\ub{24}1 & 31\ub{24}\\
 & & & \ub{42}13 & 1\ub{42}3 & 13\ub{42}\\
 & & & \ub{42}31 & 3\ub{42}1 & 31\ub{42}
\end{array}$$
\caption{All coincidental vincular patterns, organized by number of letters, $n$, in the pattern.}\label{table:co vinc patterns}
\end{table}

\section{Proof of Theorem~\ref{thm:main bv}}\label{section:proof}

The proof of Theorem~\ref{thm:main bv} will be given in several steps.  We first show that each boycott in a nat-co barred pattern must contain both the smallest and largest letters in the pattern, and hence there is exactly one boycott.  We will then describe the minimal and maximal distance between values in the boycott, finally concluding that there are at most two barred letters in the boycott.

\begin{prop}\label{ref:boycott contents}
Let $\b$ be a nat-co barred pattern of $n$ letters, and $X$ a boycott in $\b$.  Then $1,n \in X$.
\end{prop}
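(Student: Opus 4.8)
The plan is to argue by contradiction: suppose $\b$ is nat-co but some boycott $X$ omits the value $1$ (the case of the largest value $n$ is handled by the reverse-complement symmetry of barred and vincular avoidance, so it suffices to treat one of them). I want to produce a permutation that exhibits the difference between $\av(\b)$ and $\av(\bond{\b})$, contradicting Definition~\ref{defn:bondable}. The natural candidate is a mild perturbation of $\class{\b}$ itself: by Lemma~\ref{lem:x in av(x)}, $\class{\b}\in\av(\b)$, and by Corollary~\ref{cor:bond isn't avoided}, every $\class{\v}$ for $\v\in\bond{\b}$ lies outside $\av(\b)$, so $\class{\b}$ is not order-isomorphic to any such $\class{\v}$. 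The idea is that if $1\notin X$, then inside the barred factor $\ol{\b_i\cdots\b_j}$ (or rather, just outside it, in one of the unbarred boundary slots $\b_{i-1}$ or $\b_{j+1}$) there is ``room below'' to insert a new smallest value, and doing so will destroy every $\class{\b}$-occurrence that was forcing the barred constraint while leaving a bonded sub-pattern intact.

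Concretely, I would take $w$ to be the permutation obtained from $\class{\b}$ by inserting a new minimal element immediately adjacent to the barred factor, on the side where the boundary value is itself not equal to $1$ (such a side exists precisely because $1\notin X$ means neither boundary letter of the factor, nor any barred letter of the factor, equals $1$ — wait, more care is needed: $1\notin X$ only rules out $1$ being in this particular boycott, so I must actually locate where the value $1$ sits in $\class{\b}$ relative to the factor and use that position). The key computation is then twofold. First, I must check $w\in\av(\b)$: every occurrence of the unbarred portion of $\b$ in $w$ is still part of a full $\class{\b}$-pattern, because the inserted minimal element can always play the role of any barred letter that needed to sit between the relevant unbarred letters — this is where the hypothesis $1\notin X$ (together with maximality of the barred factor) is used, to guarantee the inserted small value fills the ``gap'' correctly. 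Second, I must exhibit some $\v\in\bond{\b}$ with $w\notin\av(\v)$: take $\v$ to be the vincular pattern replacing the \emph{entire} barred factor by a single bond; then the unbarred letters of $\b$ occur in $w$ with the bonded pair now genuinely adjacent, precisely because we removed the barred letters and the only thing between them is the new minimal value, which is not part of the $\class{\v}$-pattern. Hence $w\in\av(\b)\setminus\av(\bond{\b})$, the desired contradiction.

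The main obstacle I anticipate is the bookkeeping around \emph{where} the value $1$ actually lives in $\class{\b}$ and what it means for the insertion argument: saying ``$1\notin X$'' does not immediately say the value $1$ is far from the factor in one-line-notation position, only that it is not one of the (at most four) values constituting this boycott. So the real content is a case analysis on the position of the letter $1$ in $\class{\b}$ — it lies to the left of the factor, inside it (impossible, since $1\notin B(X)$), or to the right — and in each case verifying that the inserted new minimum can be slotted in so as to (i) not create any occurrence of the unbarred portion of $\b$ that escapes a full $\class{\b}$-pattern, and (ii) still leave a bonded occurrence of some $\v\in\bond{\b}$. I would also need the easy observation that a new \emph{global} minimum, placed adjacent to the factor, interacts with the other boycotts of $\b$ (if any) only in a controlled way, so that avoidance of $\b$ is governed entirely by the factor we are analyzing. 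Once the correct $w$ is pinned down, conditions (i) and (ii) should each reduce to a short, essentially mechanical verification using Definitions~\ref{defn:barred} and~\ref{defn:vincular}.
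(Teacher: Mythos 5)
Your overall instinct --- perturb the pattern near the barred factor by inserting a new minimum and use $1\notin X$ to contradict $\av(\b)=\av(\bond{\b})$ --- is the germ of the paper's argument, but the execution has a genuine logical flaw that is not just bookkeeping: the two conditions you set out to verify are mutually exclusive. You aim to produce $w\in\av(\b)$ that contains the vincular pattern $\v\in\bond{\b}$ obtained by replacing the entire barred factor of $X$ by a single bond. But if $w$ contains that $\v$, then $w$ has an occurrence of the unbarred portion of $\b$ in which the letters playing the roles of $\b_{i-1}$ and $\b_{j+1}$ occupy consecutive positions; there is then no position in $w$ at which letters playing the roles of the barred letters of $X$ could be inserted to complete a $\class{\b}$-pattern, so that occurrence fails to extend and $w$ contains $\b$. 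In other words $\av(\b)\subseteq\av(\v)$ holds automatically for this particular $\v$, so no witness of the kind you describe exists. The correct witness lies on the other side of the symmetric difference: the paper's $z$ is obtained by deleting \emph{all} barred letters of $\b$ and inserting a new minimum where the factor of $X$ stood; then $z\notin\av(\b)$ (the unbarred portion cannot be completed, precisely because $1\notin B(X)$ means the new minimum cannot stand in for any barred letter), while $z\in\av(\bond{\b})$ by a letter-count argument together with the fact that the inserted minimum breaks the bond.

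The second, related error is in your justification that $w$ contains $\v$: you write that the bonded pair is ``genuinely adjacent \dots\ because the only thing between them is the new minimal value, which is not part of the $\class{\v}$-pattern.'' This contradicts Definition~\ref{defn:vincular}: a bond requires the corresponding letters to appear \emph{consecutively in $w$}, so any intervening letter, whether or not it participates in the pattern, destroys the occurrence. The paper exploits exactly this fact in the opposite direction: inserting the new minimum between the would-be bonded pair is what guarantees that $z$ \emph{avoids} $\v$. There are further inconsistencies: $w$ is first built from $\class{\b}$ with the barred letters retained, and later described as having had them removed; the claim that the inserted minimum ``can always play the role of any barred letter'' is exactly what $1\notin B(X)$ rules out; and at this stage of the paper $\b$ may still have several boycotts, which is why the paper bonds all of them simultaneously before counting letters --- your closing remark defers rather than resolves that point.
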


\begin{proof}
Suppose, to the contrary, that $1 \not\in X$.  (The proof that $n \in X$ is entirely analogous.)

Write $U(X) = \{\b_{i-1},\b_{j+1}\}$ and $B(X) = \{\b_i,\b_{i+1},\cdots, \b_j\}$, where one or both of the elements of $U(X)$ may be undefined.  Let $\v \in \bond{\b}$ be the unique vincular pattern in $\bond{\b}$ having fewest letters, obtained from $\b$ by replacing all of its barred symbols by bonds.  Let $z$ be the classical permutation obtained by inserting a new smallest symbol between the bonded letters $\b_{i-1}$ and $\b_{j+1}$ in $\v$.  Note that
$$z \not\in \av(\b),$$
so $z$ must contain some element of $\bond{\b}$.

Because $1 \not\in B(X)$, we have that $z \not\in \{\class{\ts{v'}} : \ts{v'} \in \bond{\b}\}$.  Similarly, because $1 \not\in U(X)$, the permutation $z$ does not contain the vincular pattern $\v$.  Because $\v$ is the unique element of $\bond{\b}$ using the fewest letters, and because $z$ has just one more letter than $\v$ does, we see that $z$ does not contain any of the vincular elements of $\bond{\b}$ as patterns, and so
$$z \in \av(\bond{\b}).$$

This contradicts the assumption that $\av(\b) = \av(\bond{\b})$.
\end{proof}

The follows corollary is an immediate result of Lemma~\ref{lem:intersecting boycotts} and Propostion~\ref{ref:boycott contents}.

\begin{cor}
A nat-co barred pattern $\b$ has exactly one boycott.
\end{cor}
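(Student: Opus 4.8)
The plan is to combine Proposition~\ref{ref:boycott contents} with Lemma~\ref{lem:intersecting boycotts}, after first checking that a boycott exists at all. Since every barred pattern is assumed to be proper, $\b$ has at least one barred letter; hence it has at least one maximal barred factor, and so $\b$ has at least one boycott. (Properness also guarantees that $\b$ has at least one unbarred letter, so $\b$ has at least two letters and $1 \neq n$.)

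For uniqueness, suppose toward a contradiction that $X$ and $X'$ are two \emph{distinct} boycotts of $\b$. Because $\b$ is nat-co, Proposition~\ref{ref:boycott contents} applies to each of them, giving $1, n \in X$ and $1, n \in X'$. Since $1 \neq n$, the boycotts $X$ and $X'$ then share at least the two letters $1$ and $n$. This contradicts Lemma~\ref{lem:intersecting boycotts}, which asserts that any two distinct boycotts of a barred pattern share at most one letter.

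Therefore $\b$ has at least one and at most one boycott, i.e., exactly one. I do not anticipate any real obstacle here: the content is entirely carried by the two cited results, and the only things to verify by hand are the (trivial) existence of a boycott and the inequality $1 \neq n$, both of which follow from properness of $\b$.
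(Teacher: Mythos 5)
Your proof is correct and is essentially the paper's own argument: the paper derives this corollary immediately from Lemma~\ref{lem:intersecting boycotts} and Proposition~\ref{ref:boycott contents}, exactly as you do. The small additional checks you make (existence of a boycott and $1 \neq n$, both from properness) are fine and implicit in the paper.
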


For the remainder of this section, suppose that $\b$ is a nat-co barred pattern of $n$ letters, with unique boycott $X$.  Moreover, let  $U(X) = \{\b_{i-1},\b_{j+1}\}$ and $B(X) = \{\b_i,\b_{i+1},\cdots, \b_j\}$.

Consider the proximity of values that appear in $X$.

\begin{prop}\label{prop:distance in B(X)}
For any distinct $x,x' \in B(X)$, we have $|x-x'| > 1$.
\end{prop}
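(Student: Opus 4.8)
The plan is to argue by contradiction, using the same style of argument that proved Proposition~\ref{ref:boycott contents}: if two barred letters were adjacent in value, I would construct an explicit permutation witnessing $\av(\b) \neq \av(\bond{\b})$. So suppose $x, x' \in B(X)$ with $x' = x+1$ (WLOG). Since $1, n \in X$ by Proposition~\ref{ref:boycott contents} and $1, n$ are certainly never equal in value to a distinct letter by more than one only if $n \geq 3$, I may assume $B(X)$ contains this adjacent pair strictly inside. Let $\v \in \bond{\b}$ be the unique shortest vincular pattern, obtained from $\b$ by bonding \emph{all} barred symbols into one block. The barred factor $\ol{\b_i \cdots \b_j}$ of $\b$ includes both $x$ and $x+1$ among its values, and in $\v$ this whole factor has been contracted to a bond.

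The key step is to build a permutation $z$ by taking $\class{\v}$ and inserting a new value, call it $y$, order-isomorphic to $x + \tfrac12$, into the position occupied by the bonded block — that is, $z$ is obtained from $\v$ by replacing the bonded substring by that same substring with $y$ interpolated between the slots that held $x$ and $x+1$. Because $x, x+1 \in B(X)$, there is in $\b$ no \emph{un}barred letter with value between $x$ and $x+1$ (there are no integers strictly between them), so inserting $y$ into $\b$'s barred factor recreates precisely a full $\class{\b}$-occurrence: hence $z \notin \av(\bond{\b})$ is false — rather, $z$ \emph{does} contain $\class{\b}$, so every occurrence of the unbarred portion is completed, and therefore $z \in \av(\b)$. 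On the other side, I must check $z \notin \av(\bond{\b})$: the block in $z$ obtained by dropping $y$ is exactly a classical $\class{\v}$ with its bonds satisfied, so $z$ contains $\v$ (the inserted $y$ does not disrupt consecutivity of the block if placed adjacent, but $\v$'s block is now broken by $y$ — so in fact I need $z$ to contain \emph{some} element of $\bond{\b}$, not $\v$ itself). The cleaner route: $z$ contains $\class{\b}$ as observed, hence by Corollary~\ref{cor:bond isn't avoided} applied in the form that any full $\class{\b}$-pattern contains every $\v \in \bond{\b}$ when the bonds land on genuinely adjacent positions — so I should instead argue $z$ contains a specific $\v' \in \bond{\b}$ (the one bonding only the barred letters other than $\{x, x+1\}$, which are now separated by $y$ in $z$), giving $z \notin \av(\bond{\b})$, while $z \in \av(\b)$ as shown, contradicting nat-co.

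So the steps, in order, are: (1) reduce to the case of an adjacent pair $x, x+1 \in B(X)$ interior to the barred factor; (2) define $z$ from $\b$ by inserting a value $x+\tfrac12$ between the slots of $x$ and $x+1$ and reading off the underlying classical permutation; (3) show $z \in \av(\b)$, because the insertion completes every relevant unbarred occurrence into a $\class{\b}$-pattern (here I use that no unbarred letter of $\b$ has value strictly between $x$ and $x+1$, which is automatic); (4) exhibit a vincular pattern $\v' \in \bond{\b}$ contained in $z$ — namely the one whose bonds do not join $x$ and $x+1$ — so $z \notin \av(\bond{\b})$; (5) conclude the contradiction with Definition~\ref{defn:bondable}.

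The main obstacle I anticipate is step~(4): making sure that after inserting $y = x+\tfrac12$ the resulting $z$ still genuinely contains a $\v' \in \bond{\b}$, i.e. that the required consecutivities for $\v'$ survive the insertion and that the edge cases (where $x$ or $x+1$ is adjacent in \emph{position} to the unbarred flank letters, or where $B(X)$ has only the two elements $x, x+1$ so that the "other" barred letters form an empty set and $\bond{\b}$'s relevant member degenerates) are handled — in the degenerate subcase one must instead use the flank-bond element of $\bond{\b}$ or argue directly from $\class{\b} \in \av(\b)$ versus $\class{\b} \notin \av(\bond{\b})$ (Lemma~\ref{lem:x in av(x)} and Corollary~\ref{cor:bond isn't avoided}). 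Checking $z \in \av(\b)$ in step~(3) is routine once one notes that the only new subsequences created by inserting $y$ are those using $y$, and any occurrence of the unbarred portion of $\b$ in $z$ either already sat inside a $\class{\b}$-pattern in $\v$ or is completed by $y$.
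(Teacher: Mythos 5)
Your overall strategy --- a contradiction via an explicit witness separating $\av(\b)$ from $\av(\bond{\b})$ --- is the right one, and you even pick the correct side (a $z$ with $z \in \av(\b)$ but $z \notin \av(\bond{\b})$), but the witness you build does not work and the key mechanism is missing. The central error is step~(3): from ``$z$ contains a full $\class{\b}$-occurrence'' you conclude ``every occurrence of the unbarred portion is completed, and therefore $z \in \av(\b)$.'' That implication is false: one completed occurrence says nothing about the others, and inserting the value $x+\tfrac12$ manufactures new occurrences of the unbarred portion (ones using the inserted letter, or using $x$ and $x+1$ themselves in unbarred roles) that need not extend to any $\class{\b}$-pattern. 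Concretely, for $\b = 1\ol{4}\ol{5}23$ your $z$ is order-isomorphic to $145623$, and the subsequence $1,4,5$ is an occurrence of the unbarred portion $123$ that cannot be completed to a $14523$-pattern, so $z \notin \av(\b)$. The construction is also described inconsistently (first as an insertion into $\class{\v}$, then into $\class{\b}$), and step~(4) never identifies which element of $\bond{\b}$ occurs in $z$ or why its bond is satisfied.

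The missing idea is that no insertion is needed at all: the witness is $\class{\b}$ itself, which lies in $\av(\b)$ for free by Lemma~\ref{lem:x in av(x)}. If $\b_h$ and $\b_{h'}$ (with $i \le h < h' \le j$) are barred letters satisfying $|\b_h - \b_{h'}| = 1$, consider the vincular pattern $\v' = \b_1\cdots\b_{h-1}\ub{\b_h\b_{h'+1}}\b_{h'+2}\cdots\b_n \in \bond{\b}$ obtained by deleting the barred letters in positions $h+1,\dots,h'$. Then $\class{\b}$ contains $\v'$: take the subsequence in positions $1,\dots,h-1,h',h'+1,\dots,n$, letting $\b_{h'}$ stand in for $\b_h$ --- legitimate precisely because these two values differ by $1$ and $\b_h$ is omitted from the subsequence, so all relative orders are preserved --- and the bond is satisfied because positions $h'$ and $h'+1$ are adjacent. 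Hence $\class{\b} \notin \av(\bond{\b}) = \av(\b)$, contradicting Lemma~\ref{lem:x in av(x)}. This substitution trick, with one of the two value-adjacent barred letters playing the role of the other, is the point your proposal never reaches.
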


\begin{proof}
Suppose, to the contrary, that there exist $i \le h < h' \le j$ with $|\b_h - \b_{h'}| = 1$.  Then
$$\b_1 \cdots \b_{h-1}\ub{\b_h \b_{h'+1}}\b_{h'+2} \cdots \b_n \in \bond{\b}$$
is contained in $\b$.  Thus $\b \not\in \av(\bond{\b}) = \av(\b)$, contradicting Lemma~\ref{lem:x in av(x)}.
\end{proof}

\begin{prop}\label{prop:distance between U and B}
For any $x \in B(X)$ and $x' \in U(X)$, we have $|x-x'| > 1$.
\end{prop}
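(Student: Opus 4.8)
\emph{Proof proposal.}
The plan is to follow the template of Proposition~\ref{prop:distance in B(X)}: assuming the contrary, I would produce a vincular pattern in $\bond{\b}$ that $\b$ itself contains, so that $\b \notin \av(\bond{\b}) = \av(\b)$, contradicting Lemma~\ref{lem:x in av(x)}. So suppose $|x - x'| = 1$ for some $x = \b_h \in B(X)$ (hence $i \le h \le j$) and some $x' \in U(X)$. Since the two elements of $U(X)$ are treated symmetrically, I would assume $x' = \b_{i-1}$; in particular $\b_{i-1}$ is then a genuine letter of $\b$, so $i \ge 2$.

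The vincular pattern I would use, call it $\v'$, is the one obtained from $\b$ by bonding the consecutive run of barred letters $\b_i\b_{i+1}\cdots\b_h$ --- all of which lie in $B(X)$ --- that is, by contracting that run so that $\b_{i-1}$ becomes bonded to $\b_{h+1}$ (or, if $h = n$, to the right endpoint $\pt$) and dropping the bars on any surviving barred letters. By Definition~\ref{defn:bond(b)} we have $\v' \in \bond{\b}$, and $\class{\v'}$ is simply $\b$ with positions $i, \ldots, h$ deleted. The crux is to show that $\b$ contains $\v'$, and this is exactly where the hypothesis $|\b_{i-1}-\b_h| = 1$ enters: rather than locate $\class{\v'}$ at the obvious subword $\b_1\cdots\b_{i-1}\b_{h+1}\cdots\b_n$ of $\b$ (in which $\b_{i-1}$ and $\b_{h+1}$ are not adjacent), I would locate it at the subword of $\b$ occupying positions $1, \ldots, i-2, h, h+1, \ldots, n$, that is, with the value $\b_h$ playing the role of $\b_{i-1}$. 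Because $\b_{i-1}$ and $\b_h$ differ by $1$, no letter of $\b$ has value strictly between them, so replacing one by the other leaves the relative order of the chosen letters unchanged, and this subword is again an occurrence of $\class{\v'}$. But now the letter playing the role of $\b_{i-1}$ sits in position $h$, immediately to the left of $\b_{h+1}$ in position $h+1$ (and, when $h = n$, in the rightmost position of $\b$), so the sole bond of $\v'$ is realized. Hence $\b$ contains $\v'$, giving the contradiction.

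The case $x' = \b_{j+1}$ is the mirror image: one bonds the run $\b_h\b_{h+1}\cdots\b_j$, so that $\b_{j+1}$ becomes bonded to $\b_{h-1}$ (or to the left endpoint $\pt$ when $h = 1$), and reads off the occurrence in $\b$ with $\b_h$ now in the role of $\b_{j+1}$, using positions $1, \ldots, h-1, h, j+2, \ldots, n$. The only delicate points I anticipate are the boundary cases where the chosen run of barred letters reaches an end of $\b$ and the bond of $\v'$ is to an endpoint $\pt$: there one must check that the shifted occurrence puts the relevant letter in the first or last position of $\b$, which it does for exactly the same reason. That boundary bookkeeping, together with checking that the one-step value shift $\b_{i-1} \leftrightarrow \b_h$ really does preserve the underlying classical pattern, is the only place I expect to need care; the idea itself is the same value shift already implicit in the proof of Proposition~\ref{prop:distance in B(X)}.
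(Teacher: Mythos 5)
Your proposal is correct and follows the paper's proof exactly: the paper also bonds the run of barred letters between the offending pair to obtain a vincular pattern in $\bond{\b}$ that is contained in $\b$, contradicting Lemma~\ref{lem:x in av(x)}. The only difference is that the paper merely asserts the containment of $\b_1\cdots\b_{i-2}\ub{\b_{i-1}\b_{h+1}}\b_{h+2}\cdots\b_n$ in $\b$, whereas you correctly spell out the witnessing occurrence via the value shift $\b_{i-1}\leftrightarrow\b_h$.
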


\begin{proof}
Suppose, to the contrary, that there exists $\b_h \in B(X)$ such that (without loss of generality) $|\b_h - \b_{i-1}| = 1$.  Then
$$\b_1 \cdots \b_{i-2} \ub{\b_{i-1}\b_{h+1}}\b_{h+2}\cdots \b_n \in \bond{\b}$$
is contained in $\b$.  Thus $\b \not\in \av(\bond{\b}) = \av(\b)$, again contradicting Lemma~\ref{lem:x in av(x)}.
\end{proof}

The previous two propositions demonstrate that there is a minimal distance between values in $B(X)$, and between $B(X)$ and $U(X)$ values.  The next proposition shows that there is also a maximal distance between values in $X$.

\begin{prop}\label{prop:max distance between U and B}
For all $0 \le k \le n$, we have $\{k,k+1\} \cap X \neq \emptyset$.
\end{prop}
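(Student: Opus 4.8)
The plan is to argue by contradiction, exactly mirroring the pattern of the preceding three propositions: assume there is some $k$ with $\{k,k+1\}\cap X=\emptyset$, and then exhibit a permutation that separates $\av(\b)$ from $\av(\bond{\b})$. Since the previous results establish a \emph{minimum} gap between boycott values, the natural move here is to exploit a \emph{too-large} gap: if both $k$ and $k+1$ are missing from $X$, then there is ``room'' between two consecutive boycott values to do something that destroys a barred occurrence without creating any vincular occurrence.

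First I would set up notation: with $X=\{\b_{i-1},\b_i,\dots,\b_j,\b_{j+1}\}$ sorted by value, the hypothesis $\{k,k+1\}\cap X=\emptyset$ means there is a consecutive pair of boycott values, say with ranks straddling the interval $[k,k+1]$, whose numerical difference is at least $3$ (since neither $k$ nor $k+1$ is a boycott value, and by Propositions~\ref{prop:distance in B(X)} and~\ref{prop:distance between U and B} the gaps are already $\ge 2$). Actually the cleanest formulation: because $1,n\in X$ (Proposition~\ref{ref:boycott contents}) and $\{k,k+1\}\cap X=\emptyset$ for some $0\le k\le n$, there exist two values $a<b$ in $X$, consecutive among the elements of $X$, with $b-a\ge 3$. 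Next I would take $\v\in\bond{\b}$ to be the unique vincular pattern obtained by bonding \emph{all} barred letters (the ``smallest'' element of $\bond{\b}$, as in Proposition~\ref{ref:boycott contents}), and build the test permutation $z$ by taking $\class{\v}$ and inserting one new letter whose value lies strictly between $a$ and $b$, placed between the bonded letters $\b_{i-1}$ and $\b_{j+1}$ — the same insertion location used in the proof of Proposition~\ref{ref:boycott contents}, but now with a middle value rather than a new minimum.

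The verification then has two halves. On one hand, $z\notin\av(\b)$: the original $\class{\b}$-occurrence sitting inside $z$ now has a value in the ``forbidden range'' inserted between the unbarred anchors $\b_{i-1}$ and $\b_{j+1}$, which is precisely what an occurrence of the unbarred portion of $\b$ \emph{not} extendable to a $\class{\b}$-pattern requires — so $z$ contains $\b$. The point of choosing the inserted value strictly between $a$ and $b$ (rather than a new extreme) is that it genuinely plays the role of one of the barred letters' ``slots'' while not itself completing a $\class{\b}$ occurrence. On the other hand, $z\in\av(\bond{\b})$: by construction $z$ is $\class{\v}$ with a single extra letter, so it cannot contain any vincular element of $\bond{\b}$ that uses all of $\b$'s letters; and for the vincular patterns in $\bond{\b}$ that bond only a proper nonempty subset of the barred letters, the inserted letter either lies between two letters that must be bonded (breaking the bond) or, if it lies outside every bonded block, then $z$'s underlying classical pattern is not among $\{\class{\ts{v'}}:\ts{v'}\in\bond{\b}\}$ because $k,k+1\notin X$ — I need to check that the only way $z$ could contain such a $\v'$ is by using the inserted letter as one of the $\v'$-letters, and the value constraint $a<\text{(inserted)}<b$ with $b-a\ge 3$ rules that out since $\v'$'s letters occupy exactly the value-ranks of $X$. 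Concluding, $z$ witnesses $\av(\b)\ne\av(\bond{\b})$, contradicting that $\b$ is nat-co.

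The main obstacle I anticipate is the case analysis in showing $z\in\av(\bond{\b})$ when $|B(X)|=2$ and the vincular patterns in $\bond{\b}$ bond only \emph{one} of the two barred letters: here $z$ has just one more letter than such a $\v'$ has, so a containment is a priori possible, and I have to argue carefully that the inserted middle value cannot slot into $\v'$ without violating either a bond or the value pattern. Pinning down exactly which two consecutive boycott values have gap $\ge 3$, and confirming that the inserted value can always be chosen to avoid coinciding in rank with any $X$-value, is the delicate bookkeeping step; everything else follows the template already established in this section.
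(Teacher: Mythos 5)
Your strategy is the paper's: assume $k,k+1\notin X$, form $z$ by inserting a single new value into the unbarred portion of $\b$ at the slot left by the deleted barred factor, and check $z\in\av(\bond{\b})\setminus\av(\b)$. But the one detail you leave open --- \emph{which} value to insert --- is the crux, and your stated constraint (any value $c$ strictly between the two consecutive boycott values $a<b$ with $b-a\ge 3$) is not sufficient. Since the inserted letter occupies, positionally, exactly the slot where a retained barred letter would sit, whether $z$ is order-isomorphic to $\class{\ts{v'}}$ for some $\ts{v'}\in\bond{\b}$ is decided purely by the value-rank of $c$: a short computation shows $z\approx\class{\ts{v'}}$ (with $\ts{v'}$ retaining the barred value $x$) exactly when $c\in(x-1,x+1)$. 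Your interval $(a,b)$ can contain such sub-intervals --- for instance $(a,a+1)$ when $a\in B(X)$ --- so the choice $c=a+\tfrac12$ is permitted by your constraint, and in the case $|B(X)|=1$ it makes $z\approx\class{\b}$ itself. Then $z\in\av(\b)$ by Lemma~\ref{lem:x in av(x)} and $z\notin\av(\bond{\b})$ by Corollary~\ref{cor:bond isn't avoided}: both halves of your verification fail and no contradiction is obtained. You flag this as ``delicate bookkeeping,'' but as written the step genuinely fails for admissible choices of $c$.

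The repair is to use the hypothesis directly rather than the derived gap $(a,b)$: insert $c=k+\tfrac12$, as the paper does. Because neither $k$ nor $k+1$ lies in $X$, we have $c\notin(x-1,x+1)$ for every $x\in X$, so $z$ is not order-isomorphic to $\class{\ts{v'}}$ for any $\ts{v'}\in\bond{\b}$ (nor to $\class{\b}$), and the unique occurrence of the unbarred portion of $\b$ in $z$ has the inserted letter sitting between the two letters that the minimal element of $\bond{\b}$ bonds together. This yields $z\in\av(\bond{\b})$ and $z\notin\av(\b)$ exactly as you intend, and the bookkeeping you were worried about disappears. Everything else in your outline (the reduction to patterns with at most $|z|$ letters, and the separate treatment of the minimal element of $\bond{\b}$ via its broken bond) matches the paper's argument.
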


\begin{proof}
Suppose, to the contrary, that there is such a $k$ with $k,k+1\not\in X$.  Consider
$$z = \b_1 \cdots \b_{i-1} (k+.5) \b_{j+1}\cdots \b_n,$$
obtained by removing the entire barred factor $\b$ and inserting $k+.5$ in its place.  By construction, $z \not\in \{\class{\v} : \v \in \bond{\b}\}$, and $z$ does not contain the vincular pattern $\b_1 \cdots \ub{\b_{i-1} \b_{j+1}}\cdots \b_n \in \bond{\b}$.  Thus $z \in \av(\bond{\b})$.  However, $z \not\in \av(\b)$, yielding a contradiction.
\end{proof}

\begin{prop}\label{prop:boycott size}
$|B(X)| \le 2$.
\end{prop}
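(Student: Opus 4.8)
The plan is to argue by contradiction: assume $|B(X)| \ge 3$ and produce a permutation $z$ with $z \notin \av(\b)$ but $z \in \av(\bond{\b})$, violating Definition~\ref{defn:bondable}. The construction mimics the proof of Proposition~\ref{prop:max distance between U and B}: starting from $\b$, delete the entire barred factor $\b_i\cdots\b_j$ and insert in its place a short ``gadget'' of one or two brand-new letters. Because $|B(X)|\ge 3$, replacing the $j-i+1\ge 3$ deleted letters by at most two means $z$ has strictly fewer than $n$ letters, so $z$ cannot contain $\class{\b}$; on the other hand $z$ still carries the unbarred portion of $\b$ in its original relative order, so by Definition~\ref{defn:barred} the permutation $z$ contains $\b$, i.e. $z \notin \av(\b)$. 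A one-letter gadget alone will not suffice: whatever rank it is given, the resulting $z$ tends to be order isomorphic to $\class{\v}$ for the element $\v\in\bond{\b}$ that keeps a single barred letter, and $\class{\v}\notin\av(\v)$ by Lemma~\ref{lem:x in av(x)}; so the gadget must be chosen so as to sidestep every such $\class{\v}$.

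The heart of the matter is then verifying $z \in \av(\bond{\b})$, and I would begin with length bookkeeping. A vincular pattern in $\bond{\b}$ obtained by turning a set $S$ of barred letters into bonds has $n-|S|$ letters, so as soon as $|S|\le |B(X)|-2$ it is longer than $z$ and cannot embed in $z$. Hence only the patterns with $|S|\in\{|B(X)|-1,\ |B(X)|\}$ must be excluded. The single pattern with $|S|=|B(X)|$ bonds together the two letters flanking the old barred factor (or a flank and an endpoint $\pt$), and the gadget — sitting exactly between them in $z$ — destroys the only spot that bond could occur; a short argument from the maximality of the boycott rules out any other occurrence. For the patterns with $|S|=|B(X)|-1$, the surviving barred letter occupies, after relabeling, a rank separated by at least $2$ from the ranks of its neighbors, by Propositions~\ref{prop:distance in B(X)} and~\ref{prop:distance between U and B}; choosing the gadget's letter(s) to occupy an adjacent-rank slot that no barred letter occupies then forces $z$ to be non-isomorphic to (hence, being of the same length, not to contain) any of these. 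Combining these observations, $z$ avoids all of $\bond{\b}$, completing the contradiction.

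The main obstacle is precisely this last verification: one has to pin down the gadget's rank and position sharply enough that $z$ provably fails to be order isomorphic to any $\class{\v}$ with $\v \in \bond{\b}$ and provably avoids the few genuinely shorter vincular patterns, and one must handle the two boundary features of the boycott separately — whether $U(X)$ has one element or two (the barred factor touching an end of $\b$), and whether the two unbarred flanks are consecutive in value (the exceptional pair permitted in Theorem~\ref{thm:main bv}). These are finite, essentially mechanical case checks, but they are where the real content of Proposition~\ref{prop:boycott size} lies.
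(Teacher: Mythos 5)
Your strategy is the paper's: assume $|B(X)|\ge 3$, delete the barred factor, insert a short gadget between $\b_{i-1}$ and $\b_{j+1}$, and check that the result lies in $\av(\bond{\b})\setminus\av(\b)$.  But there is a genuine gap: you never construct the gadget, and the one-letter version that your length bookkeeping is calibrated to cannot be made to work.  Consider $\b=2\,4\,3\,\ol{1}\,\ol{5}\,\ol{9}\,7\,6\,8$, which satisfies Propositions~\ref{prop:distance in B(X)}--\ref{prop:max distance between U and B} with $B(X)=\{1,5,9\}$ and $U(X)=\{3,7\}$.  A single inserted value $g$ either falls in the same gap between consecutive unbarred values as one of $1,5,9$ --- making $z$ order isomorphic to $\class{\v}$ for the $\v\in\bond{\b}$ that retains that barred letter --- or falls in $(2,4)$ or $(6,8)$ and then impersonates a flanking unbarred letter: for $g=3.5$, which is exactly your prescribed ``adjacent-rank slot that no barred letter occupies,'' the subsequence $2,4,3.5,7,6,8$ of $z=2\,4\,3\,3.5\,7\,6\,8$ sits in positions $1,2,4,5,6,7$ and is an occurrence of $2\,4\,\ub{3\,7}\,6\,8\in\bond{\b}$ with the bonded pair in adjacent positions.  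So the gadget does \emph{not} ``destroy the only spot that bond could occur,'' and maximality of the boycott is not the relevant tool for ruling out such stray occurrences; only the value spacing guaranteed by Propositions~\ref{prop:distance in B(X)}--\ref{prop:max distance between U and B} can do that.

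A two-letter gadget is therefore forced, and the paper's choice is the explicit one $\b_{i+1}\,(\b_{i+1}+.5)$: reusing a deleted barred value, split into two consecutive values, is what lets those propositions eliminate every element of $\bond{\b}$ at once.  With two letters your accounting also shifts by one: $|z|=n-|B(X)|+2$, so the patterns with $|S|=|B(X)|-2$ (retaining two barred letters) have exactly the length of $z$ --- they are not ``longer than $z$'' and are precisely the ones threatened by order isomorphism --- while the patterns with $|S|=|B(X)|-1$ are strictly shorter than $z$, so ``non-isomorphic, hence not contained'' does not dispose of them.  The skeleton of your argument matches the paper's, but the actual content of the proposition lies in the choices you deferred.
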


\begin{proof}
Suppose, to the contrary, that $|B(X)| > 2$; that is, $j-i > 1$.  Then consider
$$y = \b_1 \cdots \b_{i-1} \b_{i+1} (\b_{i+1} + .5) \b_{j+1} \cdots \b_n.$$
By Propositions~\ref{prop:distance in B(X)}--\ref{prop:max distance between U and B}, we have $y \in \av(\bond{\b})$.  However, once again we have $y \not\in \av(\b)$, yielding a contradiction.
\end{proof}

Propositions~\ref{prop:distance in B(X)}--\ref{prop:boycott size} now prove one direction of Theorem~\ref{thm:main bv}: if a barred pattern is nat-co then it must satisfy the listed points.  The other direction of the theorem is given by the following proposition.

\begin{prop}\label{prop:other direction}
If a barred pattern $\b$ has one of the forms outlined in Corollary~\ref{cor:nat-co vinc precise}, then $\b$ is nat-co.
\end{prop}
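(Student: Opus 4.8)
The plan is to prove the forward implication already established together with this converse by showing the inclusion $\av(\bond{\b}) \subseteq \av(\b)$; the reverse inclusion $\av(\b) \subseteq \av(\bond{\b})$ is exactly Corollary~\ref{cor:bond isn't avoided} read contrapositively together with the fact that $\class{\v}$-patterns for $\v \in \bond{\b}$ are $\class{\b}$-patterns, so the real content is one containment. Equivalently, I want to show: if $w \notin \av(\b)$, then $w$ contains some $\v \in \bond{\b}$. By Definition~\ref{defn:barred}, $w \notin \av(\b)$ means $w$ has an occurrence of the unbarred portion $\class{\v_{\min}}$ of $\b$ (where $\v_{\min}$ is the shortest element of $\bond{\b}$, i.e.\ all barred letters replaced by bonds, all other decorations erased) that does not extend to a full $\class{\b}$-occurrence. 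Fix such an occurrence, and — as in the proof of Lemma~\ref{lem:bondable ex} — choose it so that the positions in $w$ playing the roles of the letters immediately flanking the boycott, namely $\b_{i-1}$ and $\b_{j+1}$, are as close together as possible. I then want to argue that under the hypotheses of Corollary~\ref{cor:nat-co vinc precise}, the letters between these two positions in $w$ must realize exactly the barred letters $B(X)$ in the correct relative order, giving an honest occurrence of one of the elements of $\bond{\b}$.

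The key steps, in order, are as follows. First, set up notation: let $X$ be the unique boycott, $U(X)=\{\b_{i-1},\b_{j+1}\}$, $B(X)=\{\b_i,\dots,\b_j\}$ with $|B(X)|\in\{1,2\}$, and recall from the displayed classification that $\{1,n\}\subseteq X$, that consecutive-in-value pairs inside $X$ occur only as $U(X)$ itself, and that every gap $\{k,k+1\}$ meets $X$. Second, given the chosen occurrence of $\class{\v_{\min}}$ in $w$ with the flanking positions as close as possible, observe that minimality forces no value strictly below $\min(U(X)\cup B(X)$-relevant bound$)$ and no value strictly above the corresponding upper bound to sit strictly between those two positions — this is the same squeezing argument as in Lemma~\ref{lem:bondable ex}, and the ``every gap meets $X$'' hypothesis is precisely what guarantees that the only relative values that can legally appear there are those in $B(X)$. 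Third, since the original $\class{\v_{\min}}$-occurrence does not extend to a $\class{\b}$-pattern, none of the values realizing $B(X)$ (in the right order) can all appear between the flanking positions with the correct relative values, so at most $|B(X)|-1$ of them can; but combined with step two this pins down exactly which subset, in which order, does appear between the two positions, and hence identifies which vincular pattern of $\bond{\b}$ is contained in $w$. Fourth, handle the boundary subtleties: when $\b_{i-1}$ or $\b_{j+1}$ is undefined (i.e.\ the boycott touches an end of $\b$), the adjacent bond in $\v$ becomes a bond to an endpoint $\pt$, and one checks that ``the flanking position is as far left/right as possible in $w$'' — forced again by minimality of the occurrence — supplies exactly the endpoint condition of Definition~\ref{defn:vincular}. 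Fifth, record the $|B(X)|=2$ bookkeeping: the three pairs $(U(X),B(X))$ allowed for $n=6$ and the $X=\{1,3,5,7\}$ case for $n=7$ each produce a specific small list of possible relative orders, matching the three-element sets $\bond{\b}$ of Corollary~\ref{cor:size of vinc}, so the argument covers each case uniformly.

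The main obstacle I expect is the case analysis hidden inside step three when $|B(X)|=2$: with two barred letters there are several relative arrangements (the barred factor can be increasing or decreasing, and each barred letter can lie above, below, or between $\b_{i-1}$ and $\b_{j+1}$ in value), and for each one I need to verify that the squeezing argument of step two leaves no ``room'' for an illegal value and that failure to complete a $\class{\b}$-pattern forces the contained $\v\in\bond{\b}$ to be one of the three listed ones rather than something spurious. The cleanest way to organize this is probably to treat the single-barred-letter case ($n\in\{3,4,5\}$) fully first as a model — there $\bond{\b}=\{\v\}$ is a singleton and the argument is essentially verbatim Lemma~\ref{lem:bondable ex} — and then, for $|B(X)|=2$, reduce to the single-letter case by noting that each of the two barred letters individually satisfies Propositions~\ref{prop:distance in B(X)}--\ref{prop:max distance between U and B}, so the squeezing still isolates the boycott region and only the \emph{order} among the (at most two) realized barred values needs to be read off. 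I would also double-check one potential pitfall: that distinct elements of $\bond{\b}$ are genuinely distinct as vincular patterns in these cases (no degeneracy as in Example~\ref{ex:degeneracy}), which is exactly the content of the remark after Corollary~\ref{cor:nat-co vinc precise} and is needed so that ``$w$ contains some element of $\bond{\b}$'' is not vacuously weakened.
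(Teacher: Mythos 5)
Your treatment of the substantive inclusion $\av(\bond{\b}) \subseteq \av(\b)$ is essentially the paper's own argument: the paper's proof of this proposition is little more than a pointer to the squeezing procedure of Lemma~\ref{lem:bondable ex}, and your steps two through five elaborate that procedure faithfully, correctly locating where each hypothesis of Corollary~\ref{cor:nat-co vinc precise} is used (the ``every gap meets $X$'' condition to absorb intervening values into a flanking letter, the spacing conditions to keep the replacement from creating an extension, and $|B(X)|\le 2$ to limit the order analysis). The only imprecision there is the phrase ``at most $|B(X)|-1$ of them can appear'': arbitrarily many letters may lie between the flanking positions; what non-extension forbids is a pair realizing both barred roles in the correct left-to-right order, and the right move is to take the intervening letter adjacent to the appropriate flank, as you in effect do.

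The genuine gap is in how you dispose of the other inclusion, $\av(\b) \subseteq \av(\bond{\b})$. Corollary~\ref{cor:bond isn't avoided} only asserts that the specific permutations $\class{\v}$, $\v \in \bond{\b}$, fail to avoid $\b$; it says nothing about an arbitrary $w$ that merely contains some $\v$, and barred-pattern containment does not transfer along classical containment. The paper's own example makes this concrete: for $\b = 1\ol{2}3$ one has $\bond{\b} = \{\ub{12}\}$, the corollary holds ($12 \notin \av(1\ol{2}3)$), and yet $123 \in \av(1\ol{2}3)$ while $123 \notin \av(\ub{12})$ --- so the inclusion you are waving through is false for general barred patterns and cannot follow from the corollary. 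The correct (still short) argument is different: an occurrence of $\v \in \bond{\b}$ in $w$ supplies an occurrence of the unbarred portion of $\b$ whose bonded pairs are adjacent in $w$, so no position is available for letters playing the roles of the barred letters that were replaced by those bonds, and hence that occurrence cannot be completed to a $\class{\b}$-pattern; one must also check, for the patterns of Corollary~\ref{cor:nat-co vinc precise}, that the unbarred portion occurs in $\class{\b}$ only in its standard positions, so that ``part of a $\class{\b}$-pattern'' in Definition~\ref{defn:barred} really does force the standard extension. This is a finite verification, but it is a step your proposal currently replaces with an invalid citation.
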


Because the rules are so restrictive, Proposition~\ref{prop:other direction} is easy to check by hand, and relies entirely on the following observation: inserting a value to break the bond in some $\v \in \bond{\b}$ will always yield a permutation that again contains a pattern from $\bond{\b}$, or else is equal to $\b$ itself.  The proof of Lemma~\ref{lem:bondable ex} demonstrates the procedure.

\section{Further directions}

There are two obvious directions for future research.  The first is to characterize all sets of coincidental barred or vincular patterns --- not just those of cardinality $1$.  Certainly there are such sets, as indicated by Example~\ref{ex:coincidental}.  Also, although our imposed ``natural'' coincidence restriction for barred patterns was entirely in keeping with their barred behavior, it could be interesting to try to characterized all coincidental barred patterns after relaxing this rule.  Issues of pattern containment can be especially fussy, and so it is not clear that completely removing the naturality condition would be productive, but there may be some intermediate requirement that would be interesting.

\end{document}